\newcommand{\field}[1]{\mathbb{#1}}
\newcommand{\CC}{\field{C}}
\newcommand{\TT}{\field{T}}
\newcommand{\ZZ}{\field{Z}}
\newcommand{\RR}{\field{R}}
\newcommand{\Ee}{\mathcal{E}}
\newcommand{\Gg}{\mathcal{G}}
\newcommand{\Hh}{\mathcal{H}}
\newcommand{\Kk}{\mathcal{K}}
\newcommand{\Ll}{\mathcal{L}}
\newcommand{\Oo}{\mathcal{O}}
\newcommand{\la}{\langle}
\newcommand{\ra}{\rangle}
\newcommand{\ds}{\displaystyle}
\newtheorem{thm}{Theorem}[section]
\newtheorem{cor}[thm]{Corollary}
\newtheorem{lem}[thm]{Lemma}
\newtheorem{prop}[thm]{Proposition}
\theoremstyle{definition}
\newtheorem{dfn}[thm]{Definition}
\theoremstyle{remark}
\newtheorem{rmk}[thm]{Remark}
\newtheorem{example}[thm]{Example}
\newtheorem*{examples*}{Examples}
\numberwithin{equation}{subsection}
\title{Groupoid actions on  $C^*$-correspondences}
\author{Valentin Deaconu}
\address{Valentin Deaconu \\ Department of Mathematics (084)\\ University
of Nevada\\ Reno NV 89557-0084\\ USA} \email{vdeaconu@unr.edu}
\keywords{$C^*$-correspondence; Groupoid action; Groupoid representation; Graph algebra;  Cuntz-Pimsner algebra.}
\subjclass{Primary 46L05.}
\begin{document}
\begin{abstract}
Let the groupoid $G$ with unit space $G^0$ act via a representation $\rho$ on a  $C^*$-correspondence ${\mathcal H}$ over the $C_0(G^0)$-algebra $A$. By the universal property, $G$ acts  on the Cuntz-Pimsner algebra ${\mathcal O}_{\mathcal H}$ which becomes a $C_0(G^0)$-algebra. The action of $G$ commutes with the gauge action on ${\mathcal O}_{{\mathcal H}}$, therefore $G$ acts also on the core algebra ${\mathcal O}_{\mathcal H}^{\mathbb T}$. 

We study the crossed product  ${\mathcal O}_{\mathcal H}\rtimes G$ and the fixed point algebra ${\mathcal O}_{\mathcal H}^G$ and obtain similar results as in \cite{D}, where  $G$ was a group.    Under certain conditions, we prove that ${\mathcal O}_{\mathcal H}\rtimes G\cong \Oo_{\Hh\rtimes G}$, where $\Hh\rtimes G$ is the crossed product $C^*$-correspondence and  that ${\mathcal O}_{\mathcal H}^G\cong{\mathcal O}_\rho$, where ${\mathcal O}_\rho$ is the Doplicher-Roberts algebra defined using intertwiners.

The motivation of this paper comes from groupoid actions on graphs. Suppose $G$ with  compact isotropy acts on a discrete  locally finite graph $E$ with no sources. Since $C^*(G)$ is strongly Morita equivalent to a commutative $C^*$-algebra, we prove that the crossed product $C^*(E)\rtimes G$ is stably isomorphic to a  graph algebra. We illustrate with some examples.
  
\end{abstract}
\maketitle
\section{introduction}

\bigskip

Group actions   provide important bridges between dynamics and $C^*$-algebras via the crossed product construction. In this paper, we consider groupoids acting on fibered spaces and on directed graphs. Under certain conditions, these actions can be extended to the graph $C^*$-correspondence and we obtain interesting results about the associated Cuntz-Pimsner algebras.

 Given a group $G$ acting on a  $C^*$-correspondence ${\mathcal H}$ over a $C^*$-algebra $A$ via $\rho:G\to {\mathcal L}_{\mathbb C}({\mathcal H})$, in \cite{D} we studied the  fixed point algebra ${\mathcal O}_{\mathcal H}^G$ and the crossed product ${\mathcal O}_{\mathcal H}\rtimes G$, with applications to group actions on graphs.  The Doplicher-Roberts algebra ${\mathcal O}_\rho$ was defined  from intertwiners $(\rho^m, \rho^n)$, where $\rho^n=\rho^{\otimes n}$ is the tensor power representation of $G$ on the balanced tensor product ${\mathcal H}^{\otimes n}$.
We proved that in certain cases ${\mathcal O}_\rho$ is isomorphic to ${\mathcal O}_{\mathcal H}^G$ and strongly Morita equivalent to
 ${\mathcal O}_{\mathcal H}\rtimes G$.
 
 In this paper, we  obtain similar results for groupoid actions, under some extra assumptions. We  first recall the machinery associated with groupoid actions on spaces, on other groupoids, on graphs, on $C_0(X)$-algebras and on $C^*$-correspondences.  If $G$ is a locally compact amenable groupoid with Haar system acting on a $C^*$-correspondence $\Hh$ and $J_\Hh$ is the Katsura ideal, we assume that $J_\Hh\rtimes G\cong J_{\Hh\rtimes G}$ in order to obtain the isomorphism ${\mathcal O}_{\mathcal H}\rtimes G\cong \Oo_{\Hh\rtimes G}$ as in \cite{HN}. We illustrate with some examples, including self-similar actions on the path space of a finite graph and actions on Hermitian vector bundles.
 
 As an application, if $G$ with  compact isotropy groups acts on a  discrete  locally finite graph $E$ with no sources, we prove that $C^*(E)\rtimes G$ is stably isomorphic to the $C^*$-algebra of a graph. Since the action of $G$ commutes with the gauge action of $\TT$ on $C^*(E)$, the groupoid $G$ also acts  on the core AF-algebra $C^*(E)^{\mathbb T}$ and $C^*(E)^{\mathbb T}\rtimes G\cong (C^*(E)\rtimes G)^{\mathbb T}$ is an AF-algebra.
 
 \bigskip

\subsection*{Acknowledgements} The author would like to thank Alex Kumjian and Leonard Huang for helpful and illuminating discussions.

\bigskip

\section{Groupoid actions on spaces}
\bigskip

Let's  assume that $G$ is a second countable locally compact Hausdorff  groupoid with unit space $G^0$ and range and source maps $r,s:G\to G^0$. The set of composable pairs is denoted $G^2$ and the set of $g\in G$ with $s(g)=u, r(g)=v$ is denoted $G_u^v$. We first recall the definition of a groupoid action on a space given in \cite{MRW1}:

\begin{dfn}\label{sa}  A topological groupoid $G$ is said to act
(on the left) on a locally compact space $X$, if there are given 
a continuous, open surjection $p : X \rightarrow G^0$,  called the anchor or momentum map,
and a continuous map
\[G\ast X \rightarrow X, \quad\text{write}\quad (g , x)\mapsto
g \cdot x,\]
where
\[G \ast X = \{(g , x)\in G \times X \mid s(g) = p (x)\},\]
that satisfy

\medskip

i) $p (g \cdot x) =r (g)$ for all $(g , x) \in G \ast X,$

\medskip

ii) $(g _2, x) \in G \ast X,\,\, (g_1, g_2)
\in G ^2$ implies $(g _1g _2, x),
(g _1, g _2\cdot x) \in G * X$ and
\[g _1\cdot(g _2\cdot x) = (g _1g _2)
\cdot x,\]

\medskip

iii) $p (x)\cdot x = x$ for all $x\in X$. 
\end{dfn}
We should mention that recently, many authors  assume that $p : X\to G^0$ is not necessarily open (see  \cite{BM} for example).

The action is free if $g\cdot x=x$ for some $x$ implies $g=p(x)$.
The set of fixed points is defined as \[X^G=\{x\in X: g\cdot x=x\;\text{for all}\; g\in G_{p(x)}^{p(x)}\},\] where \[G_u^u=\{g\in G: s(g)=r(g)=u\}\] is  the isotropy group at $u\in G^0$. 

\begin{rmk}
Note that the set $X^G$  is fibered over $G^0$ via the restriction of the map $p$. It is the largest subset of $X$ on which $G$ acts trivially, in the sense that for any $u,v\in G^0$ and for any  $g,h\in G_u^v$, the elements $g,h$ induce the same maps between the fibers $p^{-1}(u)$ and $p^{-1}(v)$.   If $G$ has trivial isotropy, then $X^G=X$.
\end{rmk}
For $x\in X$, its  stabilizer group is
\[G(x)=\{g\in G: g\cdot x=x\},\] which is a subgroup of  $G_u^u$ where $u=p(x)$. The set of orbits \[G\ast x=\{g\cdot x: g\in G, s(g)=p(x)\}\] is denoted by $X/G$.

\begin{rmk}The fibered product $G \ast X$ has a natural
structure of  groupoid, called the semi-direct product or
action groupoid and is denoted  by $G \ltimes X$, where
\[(G \ltimes X)^2 = \{ ((g_1, x_1),(g _2, 
x_2)) \mid \,\,x_1 = g _2\cdot x_2\},\]
with operations
\[(g _1, g_2\cdot x_2)(g _2,  x_2) = 
(g _1g _2, x_2), \;\;
(g,x)^{-1} = (g ^{-1}, g \cdot x).\]
The source and range maps of $G \ltimes X$ are
 \[s(g ,x) = (s(g),x)=
(p( x), x),\quad
r(g ,x) =  (r(g), g\cdot x)=(p(g\cdot  x),g\cdot x),\]
and the unit space $(G\ltimes X)^0$ may be identified with $X$ via the map \[i:X\to G\ltimes X,\;\; i(x)= (p(x),x).\] The projection map\[\pi:G\ltimes X\to G, \;\; \pi(g,x)=g\] is a covering of groupoids, see \cite{DKR}. 
\end{rmk}
\begin{example} A groupoid $G$ with open range and source maps acts on its unit space $G^0$ by $g\cdot s(g)=r(g)$. Notice that $g\cdot u=u$ for all $g\in G_u^u$, in particular $(G^0)^G=G^0$.
\end{example}
If there is only one orbit in $G^0/G$, the groupoid is called transitive. For example, let the group $\mathbb R$ act on $\mathbb R/\mathbb Z$ by translation. Then
the action groupoid $\RR\ltimes\RR/\ZZ$ with unit space $\RR/\ZZ\cong\TT$ and isotropy $\ZZ$ is transitive. 

A transitive groupoid with discrete unit space is of the form $G^0\times K\times G^0$ where $K$ is the isotropy group. 
%Note that the restriction groupoid $G\mid_{r(G_{p(x)})}$ acts transitively on the orbit $G\ast x$, where \[G_{p(x)}=\{g\in G:s(g)=p(x)\}.\]

\bigskip

\section{Groupoid actions on groupoids}

\bigskip

The notion of a groupoid action on another groupoid was defined in \cite{Br} in the algebraic case. In the topological context, we give the following definition (see  \cite{AR}, page 122).
\begin{dfn}\label{gg}
 We say that a topological groupoid $G$ acts on another topological groupoid $H$ if there are a continuous open surjection $p: H\to G^0$ and a continuous map $G\ast H\to H$, write $(g,h)\mapsto g\cdot h$ where
  \[G\ast H=\{(g,h)\in G\times H \mid s(g)=p(h)\}\] 
   such that
   
   \medskip
   
   i)  $p(g\cdot h)=r(g)$  for all $(g,h)\in G\ast H$,
   
   \medskip
 
 ii) $(g_2, h)\in G\ast H, \;(g_1, g_2)\in G^2$ implies $(g_1g_2,h)\in G\ast H$ and \[(g_1g_2)\cdot h=g_1\cdot (g_2\cdot h),\]
 
 \medskip
 
 iii) $(h_1,h_2)\in H^2$ and $(g, h_1h_2)\in G\ast H$ implies $(g,h_1), (g,h_2)\in G\ast H$ and \[g\cdot(h_1h_2)=(g\cdot h_1)(g\cdot h_2),\]
 
 \medskip
 
 iv) $p(h)\cdot h=h$ for all $h\in H$.

\end{dfn}
\begin{rmk}
If $G$ acts on $H$, then in particular $G$ acts on the unit space $H^0$ via the restriction $p_0:=p\!\mid_{H^0}:H^0\to G^0$ and we have $p=p_0\circ r=p_0\circ s$. Using the fact that $h=hs(h)=r(h)h$, we deduce that we also have $s(g\cdot h)=g\cdot s(h)$ and $r(g\cdot h)=g\cdot r(h)$.
\end{rmk}
For example, a $\Gamma$-sheaf  as in \cite{Ku} is given by an \' etale groupoid $\Gamma$ acting on a group bundle $A$ over $\Gamma^0$. In this case $p_0$ is the identity.

\begin{rmk}
If the groupoid $G$ acts on $H$ via $p$, then the triple $(H, G^0,p)$ becomes a continuous field of groupoids in the terminology of \cite{LR}, which determines a continuous $C^*$-bundle. If we do not assume $p$ to be open, we just get an upper semicontinuous $C^*$-bundle. 

When $G^0$ is discrete, the groupoid $H$ is a disjoint union of groupoids $H_u=p^{-1}(u)$ for $u\in G^0$. %If $H=X$ is a space, this notion of action coincides with the above definition of $G$ acting on $X$.
\end{rmk}

The fixed point groupoid $H^G$ is defined as 
\[H^G=\{h\in H:g\cdot h=h\;\text{for all}\; g\in G_{p(h)}^{p(h)}\}.\]
\begin{rmk}
If $G$ is transitive, then $H^G$ is non-empty if and only if $p:H\to G^0$ has invariant sections, i.e. continuous maps $\sigma :G^0\to H^0$ such that  $p(\sigma(u))=u$ for all $u\in G^0$ and such that $\sigma$ commutes with the actions of $G$ on $H$ and on $G^0$, i.e. $g\cdot \sigma(u)=\sigma(g\cdot u)$. 
\end{rmk}
\begin{proof}

Indeed, if $\sigma$ is an invariant section, then for $u\in G^0$ and $h=\sigma(u)\in H^0$ we have $g\cdot h=g\cdot \sigma(u)=\sigma(u)=h$ for all $g\in G_u^u$.
Conversely, given $a\in (H^G)^0$ with $p_0(a)=u$ we define $\sigma:G^0\to H^0$ by $\sigma(v)=g\cdot a$ for some $g\in G_u^v$ and $g\cdot a$ is independent of the choice of $g$.
\end{proof}
The crossed product groupoid $G\ltimes H$ is defined as follows. As a set, $G\ltimes H=G\ast H$ with multiplication
\[(g_1,h_1)(g_2,h_2)=(g_1g_2, (g_2^{-1}\cdot h_1)h_2),\]
when this makes sense, i.e. when $r(g_2)=s(g_1)=p(h_1)$, $s(g_2)=p(h_2)$ and $g_2^{-1}\cdot s(h_1)=r(h_2)$. The inverse is given by \[(g,h)^{-1}=(g^{-1}, g\cdot h^{-1}).\]
It is easy to check that the multiplication is associative and that
\[s(g,h)=(s(g), s(h)),\;\; r(g,h)=(r(g), g\cdot r(h)).\] 
The unit space of $G\ltimes H$ can be identified with $H^0$, and we have an extension of groupoids
\[H\stackrel{i}{\rightarrow} G\ltimes H\stackrel{\pi}{\rightarrow} G,\]
where $i(h)=(p(h), h)$ and $\pi(g, h)=g$.  The map $\pi$ is a fibration of groupoids, see \cite{DKR} or \cite{BM}. Indeed, given $g\in G$ and $u\in H^0=(G\rtimes H)^0$ with $s(g)=p(u)$, choose $h\in H$ with $s(h)=u$. Then $(g,h)\in G\rtimes H$ satisfies $s(g,h)=s(h)=u$ and $\pi(g,h)=g$.

%Question. Is there a cohomology exact sequence for extensions of groupoids similar to  the  inflation-restriction exact sequence of  Lyndon-Hochschild-Serre 
%\[0 \rightarrow H^1(G/N, A^N) \rightarrow H^1(G, A) \rightarrow H^1(N, A)^{G/N} \rightarrow\]\[\rightarrow H^2(G/N, A^N) \rightarrow H^2(G, A)\]
%for an extension $1\to N\to G\to G/N\to 1$ of profinite groups and a $G$-module $A$?
\begin{rmk}
In \cite{DKR} it was proved that a continuous open surjective homomorphism $\pi: G\to H$ of \' etale groupoids with amenable kernel gives rise to a Fell bundle ${\mathcal E}$ over $H$ such that $C^*_r({\mathcal E})$ is isomorphic to $C^*_r(G)$. As was observed in \cite{BM}, the Fell bundle  is saturated only if $\pi$ is a fibration. Unfortunately, this hypothesis was omitted in the main result of \cite{DKR}. The authors of \cite{BM} also remove the condition that the groupoids are \' etale, considering the more general case of  locally Hausdorff locally compact groupoids with Hausdorff unit spaces and with Haar systems. 

In particular, when the groupoid $G$ acts on $H$, we can apply the results of \cite{BM} to the fibration $G\ltimes H\stackrel{\pi}{\rightarrow} G$ and obtain a saturated Fell bundle $\mathcal E$ over $G$ such that \[C^*_r({\mathcal E})\cong C^*_r(G\ltimes H)\cong C^*_r(H)\rtimes G.\]

\end{rmk}
\bigskip

\section{Groupoid actions on   graphs}

\bigskip
 
Inspired from the notion of groupoid actions on groupoids, we define now the concept of groupoid actions on graphs. Let $E=(E^0,E^1,r,s)$ be a topological graph, i.e. $E^0,E^1$ are locally compact spaces with $r,s:E^1\to E^0$ continuous maps and $s$ a local homeomorphism. 
\begin{dfn}\label{action} We say that a topological groupoid $G$ acts on  $E$ if $G$ acts on both spaces $E^0, E^1$  in a compatible way. 
This means that there is a continuous open surjection
$p:E^0\to G^0$ such that $p\circ r=p\circ s:E^1\to G^0$ and there are continuous maps $G*E^0\to E^0,\;\; G*E^1\to E^1$  such that  the conditions in Definition \ref{sa} are satisfied and such that
\[s(g\cdot e)=g\cdot s(e),\;\;r(g\cdot e)=g\cdot r(e)\] for $e\in E^1$ and $g\in G$.

\end{dfn}
\begin{rmk}
Since $p$ and $s$ are open, it follows that $p\circ s: E^1\to G^0$ is open. The action of $G$ can be extended to the set of finite paths $E^*=\bigsqcup_{k\ge 0}E^k$, where $E^k$ is the set of paths of length $k$, by \[g\cdot(e_1e_2\cdots e_k)=(g\cdot e_1)(g\cdot e_2)\cdots (g\cdot e_k)\] and similarly to the set of infinite paths $E^\infty$.
If $G^0$ is discrete, note that since $p\circ r=p\circ s$, the graph $E$ is a union of graphs $E_u$ for $u\in G^0$.
\end{rmk}
\begin{example}\label{33}Let $E$ be the graph 
 %\vspace{-5mm}
 
 \[
\begin{tikzpicture}[scale=.35]
    \node[inner sep=1.5pt, circle] (v) at (-5,0) {$v_1$};
    \draw[-stealth, semithick,blue] (v) .. controls +(0,5) and +(-4.33,2.5).. (v) node[pos=0.7, left] 
    {\color{black}$a_1$};
    \draw[-stealth, semithick, blue] (v) .. controls +(-4.33,-2.5)and +(0,-5).. (v) node[pos=0.3, left] 
    {\color{black}$a_2$};
    \draw[-stealth, semithick, blue] (v) .. controls +(4.33,2.5) and +(4.33,-2.5) .. (v) node[pos=0.5, right] 
    {\color{black}$a_3$};
     \node[inner sep=1.5pt, circle] (v) at (5,0) {$v_2$};
    \draw[-stealth, semithick,blue] (v) .. controls +(0,5) and +(-4.33,2.5).. (v) node[pos=0.7, left] 
    {\color{black}$b_1$};
    \draw[-stealth, semithick, blue] (v) .. controls +(-4.33,-2.5)and +(0,-5).. (v) node[pos=0.3, left] 
    {\color{black}$b_2$};
    \draw[-stealth, semithick, blue] (v) .. controls +(4.33,2.5) and +(4.33,-2.5) .. (v) node[pos=0.5, right] 
    {\color{black}$b_3$};

\end{tikzpicture}
\]
%\vspace{-10mm}

The transitive groupoid $G$ with unit space $G^0=E^0=\{v_1, v_2\}$ and isotropy $S_3$ acts on $E$ by permutations. In this case $(E^0)^G=E^0$ but $(E^1)^G=\emptyset$.
\end{example}

We recall now the definition of a self-similar action of a groupoid on the path space of a graph as in \cite{LRRW}, which will provide more examples of groupoid actions on graphs.
\begin{dfn}\label{ssa}(Self-similar actions)
Suppose $E$ is a finite graph with no sources. Let $vE^*$ denote the set of finite paths ending at $v$.
We define the graph \[T_E=\bigsqcup_{v\in E^0}vE^*\] to be the  union of rooted trees (also called forest)  with the set of vertices $T^0_E=E^*$ and with the set of edges  \[T^1_E=\{(\mu, \mu e):\mu\in E^*, e\in E^1, s(\mu)=r(e)\}.\]
The set Iso$(E^*)$ of partial isomorphisms $vE^*\to wE^*$ for $v,w\in E^0$ becomes a groupoid.
A {\em self-similar action} of a groupoid $G$ with $G^0=E^0$ on $E^*$ is given by a homomorphism $G\to$ Iso$(E^*)$ such that for every $g\in G$ and $e\in s(g)E^1$ there exists a unique $h\in G$ denoted also by $g\!\mid_e$ such that
\[g\cdot(e\mu)=(g\cdot e)(h\cdot \mu)\;\;\text{for all}\;\; \mu\in s(e)E^*.\]

\end{dfn}

\begin{prop} A self-similar action of a groupoid $G$  on the path space $E^*$ of a finite graph $E$ as above determines an action of $G$ on the graph $T_E$ as in Definition \ref{action}.
\end{prop}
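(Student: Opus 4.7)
The plan is to define an action of $G$ on both the vertex set $T_E^0 = E^*$ and the edge set $T_E^1$ and then verify the axioms of Definition \ref{action}. For the anchor map $p\colon T_E^0 \to G^0 = E^0$, I would send each $\mu \in vE^*$ to $v$, so that the defining condition $s(g) = p(\mu)$ of a groupoid action matches the domain condition for the partial isomorphism associated to $g$. Since the endpoints $\mu$ and $\mu e$ of any edge $(\mu, \mu e) \in T_E^1$ sit in the same tree $vE^*$, this choice of $p$ is automatically compatible with the source and range maps of $T_E$, so the requirement $p\circ r = p\circ s$ on $T_E^1$ is built in.

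For the action on $T_E^0$ itself I would simply use the given homomorphism $G \to \mathrm{Iso}(E^*)$: set $g\cdot\mu$ to be the image of $\mu$ under the associated partial isomorphism whenever $s(g) = p(\mu)$. The axioms (i)--(iii) of Definition \ref{sa} then follow at once from the groupoid structure on $\mathrm{Iso}(E^*)$ and from the fact that $G \to \mathrm{Iso}(E^*)$ is a groupoid homomorphism.

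The main technical step is extending this to the edges by setting $g\cdot(\mu, \mu e) := (g\cdot\mu,\ g\cdot(\mu e))$ and verifying that the right-hand side is again an edge of $T_E$. For this I would iterate the given self-similar identity $g\cdot(e\mu) = (g\cdot e)\,(g\!\mid_e \cdot \mu)$ along the length of $\mu$: by induction on $|\mu|$, each $g \in G$ produces a well-defined restriction $g\!\mid_\mu \in G$ for every $\mu \in s(g)E^*$, together with the identity
\[
    g \cdot (\mu e) \;=\; (g\cdot \mu)\,\bigl((g\!\mid_\mu)\cdot e\bigr),
\]
valid whenever $\mu e$ is a composable path. Taking $e' := (g\!\mid_\mu)\cdot e \in E^1$ then exhibits $g\cdot(\mu, \mu e)$ as the edge $(g\cdot\mu,\ (g\cdot\mu)\,e')$ of $T_E$, and the compatibilities $s(g\cdot\xi) = g\cdot s(\xi)$ and $r(g\cdot\xi) = g\cdot r(\xi)$ for $\xi \in T_E^1$ asked for at the end of Definition \ref{action} are immediate from this description.

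The principal obstacle is ensuring that the inductively defined restrictions $g\!\mid_\mu$ are unambiguous, i.e.\ that they satisfy the cocycle identity $g\!\mid_{\mu\nu} = (g\!\mid_\mu)\!\mid_\nu$ that makes the iteration independent of how one groups the path. This can be handled by invoking the uniqueness clause of Definition \ref{ssa}: expanding $g\cdot((\mu\nu)\alpha)$ first as $g\cdot((\mu\nu)\alpha)$ and then as $g\cdot(\mu(\nu\alpha))$ via the given self-similar formula and comparing the two outputs forces the equality. Once this is in place, the remaining axioms and continuity (which is automatic, since $E$ is finite and $T_E$ carries the discrete topology) follow without difficulty.
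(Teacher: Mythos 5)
Your proposal is correct and follows essentially the same route as the paper: the anchor map $\mu\mapsto r(\mu)$, the edge action $g\cdot(\mu,\mu e)=(g\cdot\mu,\,g\cdot(\mu e))$, and the compatibility checks with $s$ and $r$ are exactly the paper's argument. The extra care you take in showing that $g\cdot(\mu e)$ is indeed a child of $g\cdot\mu$ (via the restrictions $g\!\mid_\mu$ and their cocycle identity) is detail the paper leaves implicit in the fact that $G$ acts through partial isomorphisms of the forest, but it is a welcome and correct elaboration.
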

\begin{proof}
Indeed, the vertex space $T_E^0$ is fibered over $G^0=E^0$ via the map $\mu\mapsto r(\mu)$. For $(\mu, \mu e)\in T^1_E$ we set $s(\mu, \mu e)=\mu e$ and $r(\mu, \mu e)=\mu$. Since   $r(\mu e)=r(\mu)$, the edge space $T^1_E$ is also fibered over $G^0$. The action of $G$ on $T^1_E$ is given by
\[g\cdot (\mu, \mu e)=(g\cdot \mu, g\cdot (\mu e))\;\;\text{when}\;\; s(g)=r(\mu).\]
Since
\[s(g\cdot(\mu, \mu e))=s(g\cdot \mu, g\cdot (\mu e))=g\cdot (\mu e)=g\cdot s(\mu, \mu e),\]
\[r(g\cdot(\mu, \mu e))=r(g\cdot \mu, g\cdot (\mu e))=g\cdot \mu =g\cdot r(\mu, \mu e),\]
the  actions on $T_E^0$ and $T_E^1$ are compatible.

\end{proof}
\begin{example}\label{forrest} Let $E$ be the graph 
%\vspace{-10mm}
\[\begin{tikzpicture}[shorten >=0.4pt,>=stealth, semithick]
\renewcommand{\ss}{\scriptstyle}
\node[inner sep=1.0pt, circle, fill=black]  (v) at (-2,0) {};
\node[below] at (v.south)  {$\ss v$};
\node[inner sep=1.0pt, circle, fill=black]  (w) at (2,0) {};
\node[below] at (w.south)  {$\ss w$};
\draw[->, blue] (v) to   (w);
\node at (0,0.2){$\ss c$};
\draw[->, blue] (v) to [out=45, in=135]  (w);
\node at (0,1){$\ss b$};
\draw[->, blue] (w) to [out=-135, in=-45]  (v);
\node at (0,-1) {$\ss d$};
\draw[->, blue] (v) .. controls (-3.5,1.5) and (-3.5, -1.5) .. (v);
\node at (-3.3,0) {$\ss a$};
\end{tikzpicture}
\]

with forest 

\hspace{55mm} $T_E$
\[
\begin{tikzpicture}[shorten >=0.4pt,>=stealth, semithick]
\renewcommand{\ss}{\scriptstyle}
\node[inner sep=1.0pt, circle, fill=black]  (v) at (-4,4) {};
\node[above] at (v.north)  {$\ss v$};
\node[inner sep=1.0pt, circle, fill=black]  (w) at (2,4) {};
\node[above] at (w.north)  {$\ss w$};
\node[inner sep=1.0pt, circle, fill=black] (a) at (-5,3){};
\node[left] at (-5,3)  {$\ss a$};
\node[inner sep=1.0pt, circle, fill=black] (d) at (-3,3){};
\node[right] at (-3,3.05)  {$\ss d$};
\draw[->, blue] (a) to  (v);
\draw[-> , blue] (d) to  (v);
\node[inner sep=1.0pt, circle, fill=black] (aa) at (-5.4,2){};
\node[inner sep=1.0pt, circle, fill=black] (ad) at (-4.6,2){};
\node[left] at (-5.4,2)  {$\ss aa$};
\node[below] at (-5.4,2) {$\vdots$};
\draw[-> , blue] (aa) to   (a);
\node[right] at (-4.6,2.05)  {$\ss ad$};
\node[below] at (-4.6,2) {$\vdots$};
\draw[-> , blue] (ad) to  (a);
\node[inner sep=1.0pt, circle, fill=black] (db) at (-3.4,2){};
\node[inner sep=1.0pt, circle, fill=black] (dc) at (-2.6,2){};
\node[left] at (-3.4,2.05) {$\ss db$};
\node[below] at (-3.4,2) {$\vdots$};
\node[right] at (-2.6,2.05) {$\ss dc$};
\node[below] at (-2.6,2) {$\vdots$};
\draw[-> , blue] (dc) to   (d);
\draw[-> , blue] (db) to   (d);

\node[inner sep=1.0pt, circle, fill=black] (b) at (1,3){};
\node[left] at (1,3.05)  {$\ss b$};

\node[inner sep=1.0pt, circle, fill=black] (c) at (3,3){};
\node[right] at (3,3)  {$\ss c$};
\draw[-> , blue] (b) to  (w);
\draw[-> , blue] (c) to  (w);
\node[inner sep=1.0pt, circle, fill=black] (ba) at (0.6,2){};
\node[inner sep=1.0pt, circle, fill=black] (bd) at (1.4,2){};

\draw[-> , blue] (ba) to   (b);

\draw[-> , blue] (bd) to  (b);
\node[inner sep=1.0pt, circle, fill=black] (ca) at (2.6,2){};
\node[inner sep=1.0pt, circle, fill=black] (cd) at (3.4,2){};
\draw[-> , blue] (ca) to   (c);
\draw[-> , blue] (cd) to   (c);
\node[left] at (0.6,2.05)  {$\ss ba$};
\node[below] at (0.6,2) {$\vdots$};
\node[left] at (2.6,2)  {$\ss ca$};
\node[below] at (2.6,2) {$\vdots$};
\node[right] at (1.4,2.05) {$\ss bd$};
\node[below] at (1.4,2) {$\vdots$};
\node[right] at (3.4,2.05) {$\ss cd$};
\node[below] at (3.4,2) {$\vdots$};
\end{tikzpicture}
\]
 Consider the groupoid $G$ with unit space $G^0=\{v,w\}$ and generators $g,h$ where $g\in G_v^w, h\in G_w^v$ such that
 \[g\cdot a=c,\;\; g\!\mid_a=v,\;\; g\cdot d=b,\;\; g\!\mid_d=h,\]
\[h\cdot b=a,\;\; h\!\mid_b=v, \;\; h\cdot c=d, \;\; h\!\mid_c=g.\] 
These conditions are also presented as
\[g\cdot a\mu=c\mu, \;  \; g\cdot d\mu =b(h\cdot\mu), \; h\cdot(b\mu)=a\mu, \; h\cdot c\mu=d(g\cdot \mu),\]
and they  determine uniquely an action of  $G=\la g, h\ra$ on the graph $T_E$. 
%For example,
%\[g\cdot(aa)=ca, \;\; g\cdot (db)=ba, \;\; h\cdot (bd)=ad, \;\; h\cdot (ca)=dc.\]

\end{example}
%\vspace{-5mm}

\begin{prop} Suppose $E$ is a discrete locally finite graph with no sources. If the groupoid $G$ acts on   $E$  via a map $p:E^0\to G^0$, then $G$ acts on the path groupoid ${\mathcal G}_E$ of $E$. 
\end{prop}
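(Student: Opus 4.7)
The plan is to construct the action of $G$ on ${\mathcal G}_E$ in the sense of Definition \ref{gg}, starting from the already-established action on $E^\infty$. Recall that since $p\circ r = p\circ s$ on $E^1$, every vertex visited by a finite or infinite path of $E$ lies in a single fiber of $p:E^0\to G^0$; in particular $E^\infty$ is fibered over $G^0$ by the continuous open surjection $q(x) := p(r(x))$, where $r(x)$ denotes the initial vertex of $x$. For the path groupoid
\[{\mathcal G}_E = \{(x, n-m, y)\in E^\infty\times\ZZ\times E^\infty : \sigma^n(x)=\sigma^m(y)\},\]
I would define the anchor map $P:{\mathcal G}_E\to G^0$ by $P(x,k,y) = q(x)$. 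Consistency with $q(y)$ is automatic: since $x$ and $y$ share a common tail $\sigma^n(x)=\sigma^m(y)$, and every vertex of a path lies in the same $p$-fiber, we have $q(x)=q(\sigma^n(x))=q(\sigma^m(y))=q(y)$.

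Next I would define the action by $g\cdot (x,k,y) = (g\cdot x,\, k,\, g\cdot y)$ whenever $s(g)=P(x,k,y)$. This is well-defined in ${\mathcal G}_E$ because the action on $E^\infty$ commutes with the shift: the identity $\sigma^n(g\cdot x) = g\cdot \sigma^n(x)$ follows from $g\cdot(e_1e_2\cdots)=(g\cdot e_1)(g\cdot e_2)\cdots$, so
\[\sigma^n(g\cdot x) = g\cdot \sigma^n(x) = g\cdot \sigma^m(y) = \sigma^m(g\cdot y),\]
witnessing $(g\cdot x, k, g\cdot y)\in {\mathcal G}_E$. Then I would verify the four axioms of Definition \ref{gg}. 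Condition (i) $P(g\cdot\gamma)=r(g)$ is immediate from $q(g\cdot x)=r(g)$; condition (ii) $(g_1g_2)\cdot\gamma = g_1\cdot(g_2\cdot\gamma)$ and condition (iv) $P(\gamma)\cdot \gamma = \gamma$ reduce componentwise to the corresponding identities for the action on $E^\infty$. For condition (iii), compatibility with multiplication in ${\mathcal G}_E$, I would check
\[g\cdot\bigl((x,k,y)(y,\ell,z)\bigr) = g\cdot(x,k+\ell,z) = (g\cdot x,\, k+\ell,\, g\cdot z) = (g\cdot x,k,g\cdot y)(g\cdot y,\ell,g\cdot z),\]
which relies only on the fact that the second coordinate is additive and the first/third coordinates transform functorially.

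For continuity and openness, I would use the standard basis of cylinder sets $Z(\mu,\nu)$ indexed by pairs of finite paths with $s(\mu)=s(\nu)$. Since $G$ acts on $E^*$ preserving lengths, $g\cdot Z(\mu,\nu) = Z(g\cdot\mu, g\cdot\nu)$ whenever $s(g)=p(r(\mu))$, giving continuity of the action map from $G\ast{\mathcal G}_E$ to ${\mathcal G}_E$ and openness of $P$ from the openness of $p$ together with the local homeomorphism property of $s$. Finally, I would observe that the actions on $E^0={\mathcal G}_E^0|_{\text{initial}}$ and on the whole of ${\mathcal G}_E$ are compatible with its source and range maps, completing the verification.

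I expect the main obstacle to be purely bookkeeping: writing down $P$ consistently and checking that the shift-equivariance of the action on $E^\infty$ transfers cleanly to the tail-equivalence relation defining ${\mathcal G}_E$. There is no genuine geometric difficulty, since the compatibility $p\circ r=p\circ s$ forces all the fiberwise conditions to propagate automatically from edges to infinite paths to groupoid elements.
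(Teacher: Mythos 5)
Your proposal is correct and follows essentially the same route as the paper: both define the anchor map on $\mathcal{G}_E$ via the induced map $E^\infty\to G^0$, act diagonally by $g\cdot(x,k,y)=(g\cdot x,k,g\cdot y)$, and handle continuity with cylinder sets. The only difference is that you spell out the routine verifications (well-definedness via shift-equivariance and the axioms of Definition \ref{gg}) that the paper leaves to the reader.
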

\begin{proof}
Recall that 
\[{\mathcal G}_E=\{(x,k,y)\in E^\infty\times\ZZ\times E^\infty: \exists N\;\text{with}\; x_i=y_{i-k}\;\text{for}\; i\ge N\}\]
with operations \[(x,m,y)\cdot (y,n,z)=(x,m+n,z),\; \;(x,k,y)^{-1}=(y,-k,x)\]
and that the unit space ${\mathcal G}_E^0$ can be identified with $E^\infty$.
There is a map $\pi_0:E^\infty\to G^0$ induced by $p:E^0\to G^0$ and for $g\in G$ and $(x,k,y)\in {\mathcal G}_E$ with $s(g)=\pi_0(x)=\pi_0(y)$  we define \[g\cdot (x,k,y)=(g\cdot x, k, g\cdot y).\] It is routine to check that this action satisfies all the properties of Definition \ref{gg}. Continuity is proved using cylinder sets.
In particular, since $E^0$ and $G^0$ are discrete, the groupoid ${\mathcal G}_E$ is a disjoint union of groupoids ${\mathcal G}_{E_u}$ for $u\in G^0$.
\end{proof}
\bigskip

\section{Groupoid actions on $C^*$-algebras and $C^*$-correspondences}
\bigskip

Let $X$ be a locally compact Hausdorff space. Recall that a  $C^*$-algebra $A$ is a $C_0(X)-$algebra if there is a  non-degenerate homomorphism $\varphi: C_0(X)\to ZM(A)$, where $ZM(A)$ denotes the center of the multiplier algebra of $A$. This means  that  $\overline{\varphi (C_0(X))A}=A$, i.e. there is an approximate unit $\{f_i\}$ in $C_0(X)$ such that $\ds\lim_i\|\varphi(f_i)a-a\|=0$ for all $a\in A$. Sometimes we write $f a$ instead of $\varphi(f)a$. 

Given a $C_0(X)$-algebra $A$, for each $x\in X$ we can define the fiber $A_x$ as $A/\overline{I_xA}$ where \[I_x=\{f\in C_0(X):f(x)=0\}.\] Denote the quotient map $A\to A_x$ by $\pi_x$. It is known that the canonical map \[ A\to \prod_{x\in X}A_x, \;\; a\mapsto(\pi_x(a))_{x\in X}\] is injective and that the fibers $A_x$ give rise to an upper semicontinuous $C^*$-bundle $\mathcal A$ over $X$ such that $A\cong \Gamma_0(\mathcal A)$, the  algebra of continuous sections vanishing at infinity, see Theorem C.26 in \cite{W}.

\begin{dfn}
We say that the topological groupoid $G$ with unit space $G^0$ acts on a $C^*$-algebra $A$ if $A$ is a $C_0(G^0)$-algebra and for each $g\in G$ there is a $\ast$-isomorphism $\alpha_g:A_{s(g)}\to A_{r(g)}$ such that if $(g_1,g_2)\in G^2$ we have $\alpha_{g_1g_2}=\alpha_{g_1}\circ\alpha_{g_2}$ and for a fixed $a$, the map $g\mapsto \alpha_g(a)$ is norm continuous. We also write $g\cdot a$ for $\alpha_g(a)$.
\end{dfn}
\begin{rmk}
An action of a groupoid $G$ on a $C^*$-algebra $A$ can be understood as an isomorphism of $C_0(G)$-algebras $\alpha:s^*A\to r^*A$ such that $\alpha_{g_1g_2}=\alpha_{g_1}\circ\alpha_{g_2}$ for all $(g_1,g_2)\in G^2$. 
Here the pull backs $s^*A$ and $r^*A$ become $C_0(G)$-algebras such that $(s^*A)_g=A_{s(g)}$ and $(r^*A)_g=A_{r(g)}$. It can also be understood as a functor from the small category $G$ to the category of $C^*$-algebras as in \cite{M}.
\end{rmk}
\begin{rmk}
If the groupoid $G$ with Haar system $\lambda$ acts on $A$, then one can define the crossed product $A\rtimes G$ and the reduced crossed product $A\rtimes_r G$ by completing the $*$-algebra $\Gamma_c(G, r^*A)$ of continuous sections with compact support in the appropriate norms (see \cite{MW} for example). 

Recall that for $f, f'\in \Gamma_c(G,r^*A)$, we define
\[f\ast f'(g):=\int_Gf(h)\alpha_h(f'(h^{-1}g))d\lambda^{r(g)}(h),\;\; f^*(g)=\alpha_g(f(g^{-1})^*).\]
The fixed point algebra $A^G$ is defined as the $C_0(G^0)$-algebra with fibers \[A_x^G=\{a\in A_x: g\cdot a=a\;\text{for all}\; g\in G_x^x\}.\]
\end{rmk}
\begin{example} If the groupoid $G$ acts on the locally compact space $X$, then $C_0(X)$ becomes in a natural way a $C_0(G^0)$-algebra and $G$ acts on $C_0(X)$ by $(g\cdot f)(x)=f(g^{-1}\cdot x)$ such that $C_0(X)\rtimes G\cong C^*(G\ltimes X)$ and $C_0(X)^G=C_0(X/G)$.
\end{example}
\begin{example}
Groupoid actions on elementary $C^*$-bundles over $G^0$ satisfying Fell's condition appear in the context of defining the Brauer group Br$(G)$, see \cite{KMRW}.

\end{example}
\begin{example} If the groupoid $G$ with discrete unit space acts on the groupoid $H$, then $G$ acts on $C^*(H)$, which is a $C_0(G^0)$-algebra in a natural way.
\end{example}
Indeed, the map $p_0:H^0\to G^0$ determines a homomorphism $\varphi:C_0(G^0)\to ZM(C^*(H))$ and the fibers of $C^*(H)$ are $C^*(H_u)$ for $u\in G^0$. 

\bigskip
Let $A$ be a $C_0(X)$-algebra and let ${\mathcal H}$ be a Hilbert $A$-module. We define the fibers ${\mathcal H}_x:={\mathcal H}\otimes_AA_x$, for each $x\in X$. Then $\Hh_x$ becomes a Hilbert $A_x$-module with the usual operations. The  set Iso$({\mathcal H})$ of ${\mathbb C}$-linear isomorphisms ${\mathcal H}_x\to {\mathcal H}_y$ becomes a groupoid with unit space $X$, sometimes called the frame groupoid. 
\begin{rmk}
For $T\in{\mathcal L}_A({\mathcal H})$, let $T_x\in{\mathcal L}_{A_x}({\mathcal H}_x)$ be $T\otimes 1_{A_x}$, where $1_{A_x}$ is the identity map. This gives a map ${\mathcal L}_A({\mathcal H})\to {\mathcal L}_{A_x}({\mathcal H}_x)$ and $T$ is totally determined by the family $(T_x)_{x\in X}$. The identification $\Hh=\Hh A$ defines a homomorphism $C_0(X)\to Z \Ll_A(\Hh)$ which takes a function $f\in C_0(X)$ into the operator $\xi\mapsto \xi f$. Since $\overline{\Hh C_0(X)}=\Hh$, this homomorphism induces a structure of $C_0(X)$-algebra on $\Kk_A(\Hh)$ with fibers ${\mathcal K}_{A_x}({\mathcal H}_x)$. In general, $\overline{C_0(X)\Ll_A(\Hh)}\neq \Ll_A(\Hh)$, so that ${\mathcal L}_A(\mathcal H)$ is not a $C_0(X)$-algebra. 
\end{rmk}

\begin{dfn}
Let $A, B$ be  $C_0(X)$-algebras and let ${\mathcal H}$ be a Hilbert $A$-module. We say that ${\mathcal H}$ is a $B$--$A$ $C^*$-correspondence  if there is a $\ast$-homomorphism $\phi: B\to \Ll_A(\Hh)$ such that 
\[\phi(fb)\xi=\phi(b)(\xi f)\;\;\text{for all}\;\;f\in C_0(X),\; b\in B, \; \xi\in \Hh.\]
Sometimes we write $b\xi$ for $\phi(b)\xi$.
\end{dfn}
\begin{rmk}
Since $\phi(\overline{I_xB})\subset \Hh(\overline{I_xA})$, the homomorphism $\phi$ decomposes into a family of homomorphisms $\phi_x:B_x\to\Ll_{A_x}(\Hh_x)$ and each fiber ${\mathcal H}_x$ becomes a $B_x$--$A_x$ $C^*$-correspondence, see Definition 4.1 in  \cite{L2}. 
\end{rmk}
%We assume  that the left action of $A_x$ on ${\Hh}_x$ is compatible with the left action of $A$, in the sense that the map $A\to \Ll(\Hh_x)$ factors as $A\stackrel{\pi_x}{\longrightarrow} A_x\to \Ll(\Hh_x)$. 
\begin{dfn}\label{gact}
Let $G$ be a topological groupoid  acting on the $C_0(G^0)$-algebras $A$ and $ B$. We say
that $G$ acts on the $B$--$A$ $C^*$-correspondence ${\mathcal H}$  if there is a  homomorphism $\rho:G\to \;\;$Iso$(\mathcal H)$   such that  $g\mapsto\rho(g)\xi$ is norm continuous  and such that the following compatibility conditions are satisfied:
 \[ \langle \rho(g)\xi,\rho(g)\eta\rangle_{r(g)}=g\cdot\langle\xi,\eta\rangle_{s(g)} ,\] for $\xi, \eta\in {\mathcal H}_{s(g)}$ and
 \[ \rho(g)(\xi a)=(\rho(g)\xi)(g\cdot a),\;\; \rho(g)(b\xi)=(g\cdot b)(\rho(g)\xi)\]
  for $\xi \in{\mathcal H}_{s(g)},\; a\in A_{s(g)}$ and $ b\in B_{s(g)}$.
\end{dfn}
\begin{rmk}
 The action of $G$ on $\Hh$ induces an action of $G$  on ${\mathcal L}_A(\mathcal H)$ by \[(g\cdot T)(\xi)=\rho(g)T(\rho(g^{-1})\xi)\] where $T\in \Ll_{A_{s(g)}}(\Hh_{s(g)})$ and on ${\mathcal K}_A(\mathcal H)$ via \[g\cdot \theta_{\xi, \eta}=\theta_{\rho(g)\xi, \rho(g)\eta},\] where for $\xi, \eta\in \Hh_{s(g)}$ we have $\theta_{\xi, \eta}(\zeta)=\xi\la \eta, \zeta\ra$.  
 
It is easy to check that the  left multiplication $\phi: B\to {\mathcal L}_A(\mathcal H)$ is $G$-equivariant. Indeed, 
 \[(g\cdot \phi(b))(\xi)=\rho(g)(\phi(b)(\rho(g^{-1})\xi)=\phi(g\cdot b)(\rho(g)\rho(g^{-1})\xi)=\phi(g\cdot b)(\xi).\]
 \end{rmk}

 \begin{example} If $G$ is a locally compact groupoid with Haar system, then $G$ acts on the  $C^*$-correspondence  over $C_0(G^0)$ with fibers $L^2(G^u,\lambda^u)$  via the left regular representation.
 \end{example}
 
 \bigskip

 \section{Crossed products and Cuntz-Pimsner algebras}
 
 \bigskip
 
In this section we assume that $G$ is a locally compact groupoid with Haar system. We make some extra assumptions in order to obtain similar results as in \cite{HN} for the case of amenable groupoid actions on $C^*$-correspondences.
 
 \begin{dfn}Suppose the locally compact groupoid $G$ with Haar system acts on the $C^*$-correspondence  ${\mathcal H}$  as in Definition \ref{gact}.
 The  crossed product  ${\mathcal H}\rtimes G$ is defined as a completion of $\Gamma_c(G,r^*{\mathcal H})$
which becomes a $(B\rtimes G)$--$(A\rtimes G)$\; $C^*$-correspondence   using the operations
  \[\langle \xi, \eta\rangle(g)=\int_G h\cdot\langle \xi(h^{-1}),\eta(h^{-1}g)\rangle_{s(h)} d\lambda^{r(g)}(h),\]
  \[(\xi f_1)(g)=\int_G\xi(h)(h\cdot f_1(h^{-1}g))d\lambda^{r(g)}(h),\]
  \[(f_2\xi)(g)=\int_Gf_2(h)\rho(h)\xi(h^{-1}g))d\lambda^{r(g)}(h),\]
  where $\xi,\eta\in \Gamma_c(G,r^*{\mathcal H}), f_1\in \Gamma_c(G,r^*A)$ and $ f_2\in \Gamma_c(G,r^*B)$. 
\end{dfn}

\begin{lem}
If the groupoid $G$ acts on a $C^*$-correspondence  $\Hh$ over the $C_0(G^0)$-algebra $A$, then there is an isomorphism \[\kappa:\Kk_{A\rtimes G}(\Hh\rtimes G)\to \Kk_A(\Hh)\rtimes G\]
given by
\[\kappa(\theta_{\xi, \eta})(g)=\int_G\theta_{\xi(h),\rho(h)\eta(h^{-1}g)}d\lambda^{r(g)}(h).\]
\end{lem}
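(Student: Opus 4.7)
The plan is to define $\kappa$ first on the dense $*$-subalgebra of finite linear combinations of rank-one operators $\theta_{\xi,\eta}$ with $\xi,\eta\in\Gamma_c(G,r^*\Hh)$ via the formula stated, verify that it is a $*$-homomorphism into $\Kk_A(\Hh)\rtimes G$, then extend by continuity and establish bijectivity.

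First I would check well-definedness: for $\xi,\eta\in\Gamma_c(G,r^*\Hh)$ the integrand $(g,h)\mapsto\theta_{\xi(h),\rho(h)\eta(h^{-1}g)}$ is a compactly supported continuous section of $r^*\Kk_A(\Hh)$ (using strong continuity of $\rho$ and joint continuity of $(\zeta,\zeta')\mapsto\theta_{\zeta,\zeta'}$), so $\kappa(\theta_{\xi,\eta})(g)$ exists and defines an element of $\Gamma_c(G,r^*\Kk_A(\Hh))\subset\Kk_A(\Hh)\rtimes G$. Next I would verify the two algebraic identities
\[\kappa(\theta_{\xi,\eta})^{*}=\kappa(\theta_{\eta,\xi}),\qquad \kappa(\theta_{\xi_1,\eta_1}\circ\theta_{\xi_2,\eta_2})=\kappa(\theta_{\xi_1,\eta_1})*\kappa(\theta_{\xi_2,\eta_2}).\]
Both reduce, after unfolding the definitions of the involution $T^{*}(g)=g\cdot T(g^{-1})^{*}$ and convolution on $\Kk_A(\Hh)\rtimes G$, to Fubini exchanges combined with the identity $\theta_{\rho(h)\xi,\rho(h)\eta}=h\cdot\theta_{\xi,\eta}$ noted in the previous section and left-invariance of the Haar system.

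To obtain a homomorphism on all of $\Kk_{A\rtimes G}(\Hh\rtimes G)$ I would show that $\kappa$ intertwines the natural actions on $\Hh\rtimes G$. Concretely, the formula
\[(T\zeta)(g)=\int_G T(h)\rho(h)\zeta(h^{-1}g)\,d\lambda^{r(g)}(h),\quad T\in\Gamma_c(G,r^*\Kk_A(\Hh)),\ \zeta\in\Gamma_c(G,r^*\Hh),\]
defines an action of $\Kk_A(\Hh)\rtimes G$ on the Hilbert module $\Hh\rtimes G$ by adjointable operators, and a direct Fubini computation gives $\kappa(\theta_{\xi,\eta})\zeta=\theta_{\xi,\eta}(\zeta)$, where the right-hand side is $\zeta\mapsto\xi\langle\eta,\zeta\rangle_{A\rtimes G}$ computed with the $C^{*}$-correspondence operations on $\Hh\rtimes G$. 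Thus $\kappa$ is the restriction to rank-one operators of a bounded $*$-homomorphism, which extends by continuity to the full algebra $\Kk_{A\rtimes G}(\Hh\rtimes G)$.

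Injectivity follows because the above representation of $\Kk_A(\Hh)\rtimes G$ on $\Hh\rtimes G$ is faithful: if $T\in\Kk_A(\Hh)\rtimes G$ annihilates $\Hh\rtimes G$, then $\langle\zeta_1,T\zeta_2\rangle_{A\rtimes G}=0$ for all $\zeta_i\in\Gamma_c(G,r^*\Hh)$, and unfolding this together with an approximate identity for $\Kk_A(\Hh)$ built from rank-one operators $\theta_{\xi,\xi}$ forces $T(g)=0$ for every $g$. The main obstacle will be surjectivity, namely that the image is dense in $\Gamma_c(G,r^*\Kk_A(\Hh))$ in the inductive-limit topology. My strategy is to use a partition of unity on $G$ to reduce to sections supported on a small open bisection $U$, on which $r^*\Kk_A(\Hh)$ trivialises, approximate the section fibrewise by finite sums of rank-one operators $\theta_{\zeta(g),\zeta'(g)}$, and then choose $\xi,\eta\in\Gamma_c(G,r^*\Hh)$ with narrow bump factors so that the convolution-type integral defining $\kappa(\theta_{\xi,\eta})$ reproduces the prescribed $\theta_{\zeta,\zeta'}$ up to an error controllable via left-invariance of the Haar system. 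Together with the isometry from the faithful representation, this density yields that $\kappa$ is the claimed isomorphism.
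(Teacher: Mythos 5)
The paper states this lemma without proof, so there is nothing to compare against directly; judging your argument on its own terms, the overall architecture (define $\kappa$ on rank-one operators, check the $*$-algebraic identities via Fubini and $\theta_{\rho(h)\xi,\rho(h)\eta}=h\cdot\theta_{\xi,\eta}$, realize $\kappa(\theta_{\xi,\eta})$ as the operator $\zeta\mapsto\xi\la\eta,\zeta\ra_{A\rtimes G}$, then argue injectivity and density of the range) is the standard one, and your formula-level computations are consistent with the conventions of the paper. But there are two genuine gaps. First, your injectivity step only shows that an element of $\Gamma_c(G,r^*\Kk_A(\Hh))$ acting as zero on $\Hh\rtimes G$ vanishes; that gives injectivity on a dense $*$-subalgebra, which does not yield injectivity (equivalently, isometry) of the extension to the completed crossed product. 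Since the paper's crossed products are full, you must know that the universal $C^*$-norm on $\Gamma_c(G,r^*\Kk_A(\Hh))$ agrees with the operator norm coming from the representation on $\Hh\rtimes G$; this is exactly the content of the imprimitivity-bimodule (linking algebra) argument — one lets $G$ act on the linking algebra of ${}_{\Kk_A(\Hh)}\Hh_A$ and identifies the corner of the crossed product, as in Lemma~2.5 of \cite{HN} for groups and via \cite{MW} for groupoids — and it cannot be extracted from the pointwise computation you give. Second, your surjectivity argument localizes to open bisections on which $r^*\Kk_A(\Hh)$ trivializes; bisections are an \'etale notion, whereas the lemma is asserted for a general locally compact groupoid with Haar system. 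The density of the span of the $\kappa(\theta_{\xi,\eta})$ in $\Gamma_c(G,r^*\Kk_A(\Hh))$ should instead be run with elementary tensors $\xi=u\otimes f$, $u\in\Hh$, $f\in C_c(G)$, and an approximate-unit argument for the Haar system concentrated near $G^0$, which is precisely the fullness of the left inner product ${}_{\Kk_A(\Hh)\rtimes G}\la\xi,\eta\ra:=\kappa(\theta_{\xi,\eta})$. Repairing both points essentially amounts to proving that $\Hh\rtimes G$ is a $\Kk_A(\Hh)\rtimes G$--$A\rtimes G$ imprimitivity bimodule (when $\Hh$ is full on the left over $\Kk_A(\Hh)$), after which $\kappa$ is the canonical isomorphism; I would recommend restructuring the proof around that statement rather than the direct injective-plus-dense-range route.
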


\begin{thm}
Let $G$ be a  locally compact groupoid with Haar system that acts on a $C^*$-correspondence ${\mathcal H}$ over the $C_0(G^0)$-algebra $A$. 

Then the Katsura ideal $J_{\mathcal H}$ is $G$-invariant and  we get an action of $G$ on  the Cuntz-Pimsner algebra ${\mathcal O}_{\mathcal H}$, which becomes a $C_0(G^0)$-algebra with fibers ${\mathcal O}_{{\mathcal H}_x}$ where $x\in G^0$.

\end{thm}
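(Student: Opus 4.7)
The plan is to proceed in three stages: first verify $G$-invariance of the Katsura ideal, then use the universal property of Cuntz-Pimsner algebras to lift the action, and finally identify the fibers with the expected Cuntz-Pimsner algebras of the fiber correspondences.

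For the first stage, I would work fiberwise, where $J_{\Hh_x} = \phi_x^{-1}(\Kk_{A_x}(\Hh_x)) \cap (\ker \phi_x)^\perp$. The $G$-action on $\Kk_A(\Hh)$ given by $g\cdot\theta_{\xi,\eta} = \theta_{\rho(g)\xi,\rho(g)\eta}$ restricts to a $\ast$-isomorphism $\Kk_{A_{s(g)}}(\Hh_{s(g)}) \to \Kk_{A_{r(g)}}(\Hh_{r(g)})$. Using the intertwining identity $\rho(g)\phi_{s(g)}(a) = \phi_{r(g)}(\alpha_g(a))\rho(g)$ (which is equivalent to the $G$-equivariance of $\phi$ noted in the remark following Definition \ref{gact}), I would check that $\alpha_g$ carries $\phi_{s(g)}^{-1}(\Kk_{A_{s(g)}}(\Hh_{s(g)}))$ onto $\phi_{r(g)}^{-1}(\Kk_{A_{r(g)}}(\Hh_{r(g)}))$ and likewise sends $(\ker \phi_{s(g)})^\perp$ onto $(\ker \phi_{r(g)})^\perp$; hence $\alpha_g(J_{\Hh_{s(g)}}) = J_{\Hh_{r(g)}}$.

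For the second stage, let $(j_A, j_\Hh)$ denote the universal covariant representation of $\Hh$ in $\Oo_\Hh$. For each $g \in G$ I would define $\pi_g(a) = j_{A_{r(g)}}(\alpha_g(a))$ and $\tau_g(\xi) = j_{\Hh_{r(g)}}(\rho(g)\xi)$ on $A_{s(g)}$ and $\Hh_{s(g)}$ respectively. The compatibility conditions of Definition \ref{gact} immediately show $(\pi_g, \tau_g)$ is a Toeplitz representation, and the first stage gives Katsura covariance on $J_{\Hh_{s(g)}}$. Universality then produces a $\ast$-homomorphism $\alpha_g^\Oo : \Oo_{\Hh_{s(g)}} \to \Oo_{\Hh_{r(g)}}$, and running the same argument for $g^{-1}$ together with uniqueness in the universal property yields both that $\alpha_g^\Oo$ is an isomorphism and the cocycle identity $\alpha_{g_1 g_2}^\Oo = \alpha_{g_1}^\Oo \circ \alpha_{g_2}^\Oo$. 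Norm continuity of $g \mapsto \alpha_g^\Oo(T)$ holds on generators $j_A(a), j_\Hh(\xi)$ by continuity of $\alpha$ and $\rho$, and extends to all $T \in \Oo_\Hh$ by an $\varepsilon/3$-argument on the dense $\ast$-subalgebra generated by these.

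For the third stage, the composition $C_0(G^0) \to ZM(A) \to ZM(\Oo_\Hh)$ through $j_A$ endows $\Oo_\Hh$ with a $C_0(G^0)$-algebra structure. To identify $(\Oo_\Hh)_x = \Oo_\Hh / \overline{I_x \Oo_\Hh}$, I would compose $(j_A, j_\Hh)$ with the quotient map to obtain a covariant representation of $\Hh_x$ whose Katsura condition on $J_{\Hh_x}$ follows from the fiberwise decomposition $J_\Hh \otimes_A A_x = J_{\Hh_x}$; universality yields a surjection $\Oo_{\Hh_x} \twoheadrightarrow (\Oo_\Hh)_x$, and injectivity would follow from the gauge-invariant uniqueness theorem applied in the fiber, using that the gauge action on $\Oo_\Hh$ commutes with the $C_0(G^0)$-structure. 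The main obstacle I anticipate is precisely this fiber identification: one must argue that the fibration of Katsura ideals behaves well enough under quotienting by $I_x$ so that the fiber correspondence genuinely realizes the Katsura covariance of $\Hh_x$ on the nose, rather than only up to a further quotient, and this is the step that truly needs the $G$-equivariance proved in the first stage together with a careful invocation of Definition 4.1 of \cite{L2}.
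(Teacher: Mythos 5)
Your proposal follows essentially the same route as the paper: $G$-invariance of $J_{\Hh}$ is deduced from the $G$-equivariance of the left action $\phi$, the action on $\Oo_{\Hh}$ is obtained from the universal property via $g\cdot\pi_A(a)=\pi_A(g\cdot a)$ and $g\cdot t_{\Hh}(\xi)=t_{\Hh}(\rho(g)\xi)$, and the $C_0(G^0)$-structure is identified fiberwise with $\Oo_{\Hh_x}$. The paper's proof is considerably terser — it asserts both the invariance and the fiber identification with almost no detail — so your version is, if anything, more complete, and the subtlety you flag about $J_{\Hh}\otimes_A A_x=J_{\Hh_x}$ is indeed passed over silently in the paper.
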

\begin{proof}
If $\phi:A\to {\mathcal L}_A(\mathcal H)$ defines the left action, recall that the Katsura ideal  $J_{\mathcal H}$ is equal to $\phi^{-1}({\mathcal K}_A(\mathcal H))\cap (\ker \phi)^\perp$. Since $\phi$ is $G$-equivariant, both ideals $\phi^{-1}({\mathcal K}_A(\mathcal H))$ and $(\ker \phi)^\perp$ are $G$-invariant. 

A representation of ${\mathcal H}$ in a $C_0(G^0)$-algebra $B$ is a pair $(\pi, t)$ where $\pi:A\to B$ is a $C_0(G^0)$-homomorphism and $t:\mathcal H\to B$ is linear such that
 we have
\[t(a\cdot \xi)=\pi(a)t(\xi),\;\; \pi(\la \xi, \eta\ra)=t(\xi)^*t(\eta)\]
for all $a\in A$ and $\xi, \eta\in\mathcal H$. 

Moreover, if $t^{(1)}:{\mathcal K}_A(\mathcal H)\to B$ is given by
\[t^{(1)}(\theta_{\xi,\eta})=t(\xi)t(\eta)^*,\] then $(\pi, t)$ is covariant if
\[t^{(1)}(\phi(a))=\pi(a)\;\;\text{ for all }\;\; a\in J_{\mathcal H}.\]
The Cuntz-Pimsner algebra ${\mathcal O}_{\mathcal H}$ is generated by a universal covariant representation $(\pi_A, t_{\mathcal H})$ in the sense that for any other covariant representation $(\pi, t)$ in a $C_0(G^0)$-algebra $B$ there is a unique $*$-homomorphism $\pi \times t: {\mathcal O}_{\mathcal H}\to B$ such that \[(\pi\times t)\circ \pi_A=\pi \;\;\text{and}\;\;  (\pi\times t)\circ t_{\mathcal H}=t.\] 

Since $A$ is a $C_0(G^0)$-algebra and each ${\mathcal H}_x$ is a  $C^*$-correspondence over $A_x$, it follows that  ${\mathcal O}_{\mathcal H}$ becomes a $C_0(G^0)$-algebra with fibers ${\mathcal O}_{{\mathcal H}_x}$. The action of $G$ on the Cuntz-Pimsner algebra ${\mathcal O}_{\mathcal H}$ is defined using the universal property and is given by
\[g\cdot \pi_A(a)=\pi_A(g\cdot a)\;\text{for}\; a\in A_{s(g)},\]
\[ g\cdot t_{\mathcal H}(\xi)=t_{\mathcal H}(\rho(g)\xi)\;\text{for}\; \xi\in \Hh_{s(g)}.\]

In particular, the ideal $J_{\mathcal H}$ is a $C_0(G^0)$-algebra with fibers $J_{{\mathcal H}_x}$.

\end{proof}

\begin{thm}
Let $G$ be a locally compact amenable   groupoid with Haar system that acts on a $C^*$-correspondence ${\mathcal H}$ over the $C_0(G^0)$-algebra $A$. Assume that $J_{\Hh\rtimes G}\cong J_{\Hh}\rtimes G$. Then there are maps 
\[\mu: A\rtimes G\to {\mathcal O}_{\Hh}\rtimes G, \;\mu(f)(g)=\pi_A(f(g))\;\text{for}\; f\in \Gamma_c(G,r^*A)\] 
and 
\[\tau:\Gamma_c(G,r^*\Hh)\to {\mathcal O}_{\Hh}\rtimes G,\; \tau(\xi)(g)=t_{\Hh}(\xi(g))\]
which induce an isomorphism 
\[{\mathcal O}_{\Hh\rtimes G}\cong {\mathcal O}_{\Hh}\rtimes G.\]
\end{thm}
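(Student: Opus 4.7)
The plan is to follow the standard Hao--Ng strategy used in \cite{HN}, adapted to groupoids. The two maps $\mu$ and $\tau$ are candidate generators of a covariant representation of the crossed product correspondence $\Hh\rtimes G$ inside $\Oo_\Hh\rtimes G$; by the universal property of the Cuntz-Pimsner algebra this should yield a $*$-homomorphism $\Psi:\Oo_{\Hh\rtimes G}\to \Oo_\Hh\rtimes G$, and the proof reduces to verifying that $\Psi$ is well defined, surjective, and injective.

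First I would verify that $(\mu,\tau)$ extends to a Toeplitz representation of $\Hh\rtimes G$ in $\Oo_\Hh\rtimes G$, i.e.\ that $\tau$ extends continuously to all of $\Hh\rtimes G$, that $\mu$ is a $*$-homomorphism of $A\rtimes G$, and that the bimodule identities
\[
\tau(f_2\xi)=\mu(f_2)\tau(\xi),\quad \tau(\xi f_1)=\tau(\xi)\mu(f_1),\quad \mu(\la\xi,\eta\ra)=\tau(\xi)^*\tau(\eta)
\]
hold for $\xi,\eta\in\Gamma_c(G,r^*\Hh)$, $f_1\in\Gamma_c(G,r^*A)$, $f_2\in\Gamma_c(G,r^*B)$. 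Each identity becomes a convolution integral that, after inserting the definitions of $\mu$ and $\tau$, collapses to the corresponding identity for the representation $(\pi_A,t_\Hh)$ of $\Hh$ in $\Oo_\Hh$ combined with the $G$-equivariance relation $g\cdot t_\Hh(\xi)=t_\Hh(\rho(g)\xi)$ and the convolution formula in $\Oo_\Hh\rtimes G$; these are routine matching-of-integrands computations.

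The key step is covariance: for every $F\in J_{\Hh\rtimes G}$ we must show $\tau^{(1)}(\phi_{\Hh\rtimes G}(F))=\mu(F)$. Here I would use the isomorphism $\kappa:\Kk_{A\rtimes G}(\Hh\rtimes G)\to \Kk_A(\Hh)\rtimes G$ of the preceding lemma, together with the standing hypothesis $J_{\Hh\rtimes G}\cong J_\Hh\rtimes G$, to translate the covariance condition for $(\mu,\tau)$ into the fiberwise covariance condition $t_\Hh^{(1)}(\phi(a))=\pi_A(a)$ on $a\in J_\Hh$, integrated against the Haar system. Concretely, a rank-one kernel $\theta_{\xi,\eta}\in \Kk_{A\rtimes G}(\Hh\rtimes G)$ satisfies $\tau^{(1)}(\theta_{\xi,\eta})=\tau(\xi)\tau(\eta)^*$ in $\Oo_\Hh\rtimes G$, and a direct convolution computation identifies this with $(\kappa\otimes\mathrm{id})(\theta_{\xi,\eta})$ interpreted as an element of $\Kk_A(\Hh)\rtimes G$ acting on the left; this matches $\mu(F)$ precisely when $F$ comes from $J_\Hh\rtimes G$. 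This step is where the Katsura-ideal hypothesis is used in an essential way.

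Universality now yields $\Psi:\Oo_{\Hh\rtimes G}\to \Oo_\Hh\rtimes G$. Surjectivity is immediate from density, since the images of $\mu$ and $\tau$ contain $\Gamma_c(G,r^*\pi_A(A))$ and $\Gamma_c(G,r^*t_\Hh(\Hh))$, which densely generate $\Oo_\Hh\rtimes G$. For injectivity I would invoke the gauge-invariant uniqueness theorem for Cuntz-Pimsner algebras: the gauge action of $\TT$ on $\Oo_\Hh$ commutes with the $G$-action (as noted in the abstract), hence descends to a $\TT$-action on $\Oo_\Hh\rtimes G$, and $\Psi$ intertwines this with the canonical gauge action on $\Oo_{\Hh\rtimes G}$. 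To apply the theorem it remains to check that $\Psi$ restricted to $A\rtimes G\subset \Oo_{\Hh\rtimes G}$ is injective; amenability of $G$ ensures that $A\rtimes G=A\rtimes_r G$ embeds faithfully in $\Oo_\Hh\rtimes G=\Oo_\Hh\rtimes_r G$ via $\mu$, since $\pi_A:A\hookrightarrow\Oo_\Hh$ is injective and the full and reduced crossed products coincide. The main obstacle is the covariance step: matching the Katsura ideals and translating covariance from the fiber level to the crossed-product level is delicate, and without the hypothesis $J_{\Hh\rtimes G}\cong J_\Hh\rtimes G$ the argument breaks down, exactly as in the group case of \cite{HN}.
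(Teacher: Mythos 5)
Your proposal follows essentially the same route as the paper's proof: verify the Toeplitz relations for $(\mu,\tau)$, use the lemma identifying $\Kk_{A\rtimes G}(\Hh\rtimes G)$ with $\Kk_A(\Hh)\rtimes G$ together with the hypothesis $J_{\Hh\rtimes G}\cong J_\Hh\rtimes G$ to get covariance as in Corollary 2.9 of \cite{HN}, and then conclude via generation, injectivity of $\mu$ (from amenability), and the gauge-invariant uniqueness theorem of \cite{K}. Your write-up simply fills in more detail than the paper's brief sketch, but the argument is the same.
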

\begin{proof}
One verifies that $\tau(\xi)^*\tau(\eta)=\mu(\la \xi, \eta\ra)$, that $\tau$ exends to $\Hh\rtimes G$ and  that $\tau(f\xi)=\mu(f)\tau(\xi)$. Since $J_{\Hh\rtimes G}\cong J_{\Hh}\rtimes G$, it follows as in  Corollary 2.9 of \cite{HN} that $( \mu , \tau)$ is covariant. The rest of the proof is using the fact that the images of $\mu$ and $\tau$ generate ${\mathcal O}_{\Hh}\rtimes G$, the   injectivity of $\mu$, and the existence of the gauge action, see Theorem 6.4 in \cite{K}.
\end{proof}
\begin{example}
Let  $G$ be a  groupoid with compact unit space $X$. Then $G$ acts on $C(X)$ in the usual way, $(g f)(x)=f(g^{-1}x)$. A (locally trivial) complex vector bundle $p:{\mathcal E}\to X$ is called a $G$-bundle if $G$ acts on $\Ee$ by linear maps $\Ee_{s(g)}\to \Ee_{r(g)}$, where $\Ee_x=p^{-1}(x)$. The groupoid $G$ acts on $\Gamma(\Ee)$, the space of continuous sections $\xi:X\to \Ee$, by 
\[(g\xi)(x)=g(\xi(g^{-1}x)).\]  If $\Ee$ is Hermitian and $G$ acts by isometries, then the space $\Gamma(\Ee)$ has a natural structure of $C^*$-correspondence over $C(X)$, where the left and right multiplications are given by \[(f\xi)(x)=(\xi f)(x)=f(x)\xi(x).\]  Since 
\[g(f\xi)(x)=g(f\xi)(g^{-1}x)=g(f(g^{-1}x)\xi(g^{-1}x))=\]\[=f(g^{-1}x)g\xi(g^{-1}x)=(gf)(g\xi)(x),\]
the groupoid $G$ acts on the Cuntz-Pimsner algebra $\mathcal O_{\Gamma(\Ee)}$ which is a continuous field of Cuntz algebras (see \cite{V}).
It would be interesting to study $\mathcal O_{\Gamma(\Ee)}\rtimes G$ in some particular cases.

 \end{example}
\begin{cor}
If the groupoid $G$ acts on a graph $E$ as in Definition \ref{action}, then $G$ acts on the graph $C^*$-correspondence $\Hh_E$ and on  the graph algebra $C^*(E)={\mathcal O}_{\Hh_E}$ which becomes a $C_0(G^0)$-algebra. Since the gauge action on $C^*(E)$ commutes with the action of $G$, we also get an action of $G$ on the core algebra $C^*(E)^{\TT}$.
\end{cor}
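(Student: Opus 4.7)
The plan is to verify that the graph correspondence $\Hh_E$ fits the framework of Definition \ref{gact}, so that the preceding theorem yields the $G$-action on $\mathcal{O}_{\Hh_E}=C^*(E)$, and then to check commutation with the gauge action on generators.

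First I would set up the $C_0(G^0)$-algebra structures. The continuous open surjection $p:E^0\to G^0$ makes $C_0(E^0)$ a $C_0(G^0)$-algebra with fibers $C_0(E^0_u)$ for $E^0_u=p^{-1}(u)$, and the given $G$-action on $E^0$ transports to a $G$-action on $C_0(E^0)$ by $(g\cdot f)(x)=f(g^{-1}\cdot x)$. The hypothesis $p\circ r=p\circ s$ ensures that $E^1$ also fibers over $G^0$ (via $p\circ r$), so the graph $C^*$-correspondence $\Hh_E$ — the completion of $C_c(E^1)$ with the standard $C_0(E^0)$-valued inner product and left and right actions coming from $r$ and $s$ — decomposes into fibers $(\Hh_E)_u$ which are themselves the graph correspondences of the subgraphs $E_u$.

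Next I would define the representation $\rho:G\to\operatorname{Iso}(\Hh_E)$ by $(\rho(g)\xi)(e)=\xi(g^{-1}\cdot e)$ and verify, via the compatibilities $s(g\cdot e)=g\cdot s(e)$ and $r(g\cdot e)=g\cdot r(e)$ from Definition \ref{action}, the identities
\[
\langle\rho(g)\xi,\rho(g)\eta\rangle_{r(g)}=g\cdot\langle\xi,\eta\rangle_{s(g)},\qquad \rho(g)(\xi f)=(\rho(g)\xi)(g\cdot f),\qquad \rho(g)(f\xi)=(g\cdot f)(\rho(g)\xi),
\]
the first being a straightforward change of variable in the sum defining the inner product. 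Norm continuity of $g\mapsto\rho(g)\xi$ reduces to continuity of the action on $E^1$ restricted to the compact support of a section. With these checks in hand, the preceding theorem applies and produces the desired $G$-action on $\mathcal{O}_{\Hh_E}$, giving it the structure of a $C_0(G^0)$-algebra with fibers $\mathcal{O}_{(\Hh_E)_u}=C^*(E_u)$; identifying $\mathcal{O}_{\Hh_E}$ with $C^*(E)$ completes the first half of the statement.

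For the final assertion I would observe that on the universal covariant generators the gauge action satisfies $\gamma_z(\pi_A(a))=\pi_A(a)$ and $\gamma_z(t_{\Hh_E}(\xi))=z\,t_{\Hh_E}(\xi)$, while the $G$-action is $g\cdot\pi_A(a)=\pi_A(g\cdot a)$ and $g\cdot t_{\Hh_E}(\xi)=t_{\Hh_E}(\rho(g)\xi)$; since $G$ fixes the scalar $z$ and $\gamma_z$ fixes the groupoid variables, the two actions commute on generators and hence on all of $C^*(E)$, so $G$ restricts to an action on $C^*(E)^{\TT}$. The only genuinely computational step is the inner product compatibility in the middle paragraph, but this is bookkeeping once the compatibilities of Definition \ref{action} are invoked; I do not foresee any real obstacle, as the corollary is essentially a specialization of structural results already established.
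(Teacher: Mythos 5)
Your proposal is correct and follows the same route the paper intends: the paper states this as an immediate corollary of the preceding theorem (offering no separate proof), and your argument simply makes explicit the verification that the graph correspondence $\Hh_E$ with $(\rho(g)\xi)(e)=\xi(g^{-1}\cdot e)$ satisfies Definition \ref{gact}, followed by the generator-level check that the gauge action commutes with the $G$-action. Nothing in your write-up deviates from or adds a gap to the paper's reasoning.
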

\begin{example}
The transitive groupoid $G$ in Example \ref{33} acts on the graph $C^*$-correspondence $\Hh=\CC^3\oplus \CC^3$ over $A=\CC\oplus \CC$. It follows that $G$ acts on the graph algebra ${\mathcal O}_{\Hh}\cong {\mathcal O}_3\oplus{\mathcal O}_3$ and on ${\mathcal O}_{\Hh}^{\TT}\cong M_{3^\infty}\oplus M_{3^\infty}$. The fixed point algebra ${\mathcal O}_{\Hh}^G$ is isomorphic whith ${\mathcal O}_3^{S_3}\oplus {\mathcal O}_3^{S_3}$.
\end{example}
\begin{example}
A self-similar action  as in Definition \ref{ssa} determines an action of $G$ on the graph $C^*$-algebra $C^*(T_E)$. Since $C^*(T_E)$ is strongly Morita equivalent with $C_0(\partial T_E)$ in an equivariant way (see section 4 in \cite{KP}), it follows that $C^*(T_E)\rtimes G$ is strongly Morita equivalent with $C_0(\partial T_E)\rtimes G$. Note that $T_E$ is a union of trees which in general are not the universal cover of the graph $E$.
\end{example}

\bigskip

 \section{Doplicher-Roberts algebras}

\bigskip

The Doplicher-Roberts algebras (denoted  by ${\mathcal O}_G$ in \cite{DR1}) were introduced to construct a new duality theory for compact Lie groups $G$ which strengthens the Tannaka-Krein duality. Let ${\mathcal T}_G$ denote the representation category whose objects are tensor powers of the  $n$-dimensional representation $\rho$ of $G$ defined by the inclusion $G\subseteq U(n)$ in some unitary group $U(n)$ and whose arrows are the intertwiners. The  $C^*$-algebra ${\mathcal O}_G$ is identified in \cite{DR1} with the fixed point algebra ${\mathcal O}_n^G$, where ${\mathcal O}_n$ is the Cuntz algebra. If $\sigma_G$ denotes the restriction to ${\mathcal O}_G$ of the canonical endomorphism of the Cuntz algebra, then ${\mathcal T}_G$ can be reconstructed from the pair $({\mathcal O}_G,\sigma_G)$. Subsequently, Doplicher-Roberts algebras were associated to any object $\rho$ in a strict tensor $C^*$-category, see \cite {DR2}.

Consider now a groupoid $G$   acting on a $C^*$-correspondence ${\mathcal H}$ over the $C_0(G^0)$-algebra $A$ as in Definition \ref{gact} via the homomorphism $\rho:G\to$ Iso$({\mathcal H})$. Since the balanced tensor power $\mathcal H^{\otimes n}$ is fibered over $G^0$ with fibers $\mathcal H_x^{\otimes n}$, it follows that $\Kk_A({\mathcal H}^{\otimes n}, {\mathcal H}^{\otimes m})$ has fibers $\Kk_{A_x}(\Hh_x^{\otimes n},\Hh_x^{\otimes m})$.

Consider the tensor powers $\rho^n:G\to$ Iso$({\mathcal H}^{\otimes n})$ and define the set of intertwiners $(\rho^m, \rho^n)$ with fibers
\[(\rho^m,\rho^n)_x=\{T\in\Kk_{A_x}({\mathcal H}^{\otimes n}_x, {\mathcal H}^{\otimes m}_x)\;\mid \;  T\rho^n(g)=\rho^m(g)T\}.\]
We identify $(\rho^m,\rho^n)$ with a subset of $(\rho^{m+r},\rho^{n+r})$ via $T\mapsto T\otimes I_r$, where $I_r:{\mathcal H}^{\otimes r}\to {\mathcal H}^{\otimes r}$ is the identity map.
After this identification, it follows that the linear span ${}^0{\mathcal O}_\rho$ of $\displaystyle \bigcup_{m,n\ge 0}(\rho^m, \rho^n)$ has a natural multiplication  given by composition: if $S\in (\rho^m,\rho^n)$ and $T\in (\rho^p,\rho^q)$, then the product $ST$ is
 \[(S\otimes I_{p-n})\circ T\in (\rho^{m+p-n},\rho^q) \;\text{if}\; p\ge n,\]
 or
 \[S\circ(T\otimes I_{n-p})\in(\rho^m,\rho^{q+n-p}) \;\text{if}\; p<n.\]
 The adjoint of $T\in(\rho^m,\rho^n)$ is $T^*\in (\rho^n,\rho^m)$.

From Theorem 4.2 in \cite{DR2}, it follows that the $C^*$-closure of ${}^0{\mathcal O}_\rho$  is well defined, obtaining the Doplicher-Roberts algebra ${\mathcal O}_\rho$ associated to the 
homomorphism $\rho:G\to$ Iso$({\mathcal H})$. 
\begin{thm}
Let ${\mathcal H}$ be a full and finite projective $C^*$-correspondence over a $C_0(X)$-algebra $A$  and assume that the left multiplication $A\to {\mathcal L}_A(\mathcal H)$ is injective. If $G$ is a  groupoid with $G^0=X$ acting on $A$ and on $\mathcal H$ via $\rho:G\to$  Iso$({\mathcal H})$ as in Definition \ref{gact}, then the Doplicher-Roberts algebra ${\mathcal O}_\rho$   is isomorphic to the fixed point algebra ${\mathcal O}_{\mathcal H}^G$.
\end{thm}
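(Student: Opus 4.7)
The plan is to build an explicit $\ast$-homomorphism $\Phi : {\mathcal O}_\rho \to {\mathcal O}_\Hh^G$ out of the intertwiner construction, then verify injectivity and density of its image via the gauge action of $\TT$. Since $\Hh$ is full and finite projective with injective left action, the Katsura ideal equals $A$, the universal covariant representation $(\pi_A, t_\Hh)$ of ${\mathcal O}_\Hh$ is well behaved, and for each $m, n \geq 0$ there is a canonical isometric embedding
\[
\Kk_A(\Hh^{\otimes n}, \Hh^{\otimes m}) \hookrightarrow {\mathcal O}_\Hh, \quad \theta_{\xi,\eta} \mapsto t_\Hh^{(m)}(\xi)\, t_\Hh^{(n)}(\eta)^{*},
\]
landing in the degree $m - n$ spectral subspace $({\mathcal O}_\Hh)_{m-n}$ for the gauge action.

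First I would restrict this embedding to $(\rho^m, \rho^n) \subset \Kk_A(\Hh^{\otimes n}, \Hh^{\otimes m})$ and verify that its image lies inside ${\mathcal O}_\Hh^G$: fiberwise the induced action of $G_x^x$ on $\Kk_{A_x}(\Hh_x^{\otimes n}, \Hh_x^{\otimes m})$ is $g \cdot T = \rho^m(g)\, T\, \rho^n(g)^{-1}$, so an operator is isotropy-fixed precisely when it intertwines $\rho^n$ and $\rho^m$, which is the defining condition for $(\rho^m, \rho^n)_x$. Next I would check compatibility with the padding $T \mapsto T \otimes I_r$ used in the construction of ${}^{0}{\mathcal O}_\rho$: inside ${\mathcal O}_\Hh$ the identity $t_\Hh^{(m)}(\xi)\, t_\Hh^{(n)}(\eta)^{*} = \sum_i t_\Hh^{(m+1)}(\xi \otimes \xi_i)\, t_\Hh^{(n+1)}(\eta \otimes \xi_i)^{*}$ holds for any finite frame $\{\xi_i\}$ of $\Hh$, and the right-hand side is exactly the Cuntz-Pimsner realisation of $\theta_{\xi,\eta} \otimes I_1$. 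Hence the embedding extends linearly to ${}^{0}{\mathcal O}_\rho$, preserves the Doplicher-Roberts composition $(S \otimes I_{p-n}) \circ T$ and adjoints, and defines a $\ast$-homomorphism $\Phi : {}^{0}{\mathcal O}_\rho \to {\mathcal O}_\Hh^G$ that extends by continuity to the $C^*$-closure ${\mathcal O}_\rho$.

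To prove injectivity, I would use the fact that the grading $(\rho^m, \rho^n) \mapsto m - n$ equips ${}^{0}{\mathcal O}_\rho$ with a $\TT$-action for which $\Phi$ is equivariant, and that both algebras admit faithful conditional expectations onto their gauge cores. It therefore suffices to prove injectivity on the core $\overline{\bigcup_n (\rho^n, \rho^n)}$: this core embeds fiberwise into the inductive limit $\overline{\bigcup_n \Kk_A(\Hh^{\otimes n})}$ via maps that are tautologically injective at each level. Density of the image follows from Fourier decomposition along the gauge action: because the $\TT$-action commutes with the $G$-action, the Fourier coefficients $E_k(x) := \int_\TT z^{-k}\, \gamma_z(x)\, dz$ of $x \in {\mathcal O}_\Hh^G$ lie both in $({\mathcal O}_\Hh)_k$ and in ${\mathcal O}_\Hh^G$, and $x$ is recovered from them by Ces\`aro means. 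Using $({\mathcal O}_\Hh)_k = \overline{\bigcup_n \Kk_A(\Hh^{\otimes n}, \Hh^{\otimes (n+k)})}$ for $k \geq 0$ (and adjoints for $k < 0$) and restricting to a fiber $x_0 \in X$, the problem reduces to the group case: the isotropy group $G_{x_0}^{x_0}$ acts on the fiber correspondence $\Hh_{x_0}$ over $A_{x_0}$, which still satisfies the full, finite projective, and injective-left-action hypotheses, and the identification of the isotropy-fixed part of $({\mathcal O}_{\Hh_{x_0}})_k$ with $\overline{\bigcup_n (\rho_{x_0}^{n+k}, \rho_{x_0}^n)}$ is the group-case result of \cite{D}. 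Reassembling the fiberwise isomorphisms via the upper semicontinuous $C^*$-bundle structures on ${\mathcal O}_\rho$ and ${\mathcal O}_\Hh^G$ over $X$ produces the global isomorphism.

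The main obstacle is the averaging step implicit in this fiberwise reduction: approximating an isotropy-fixed element of the infinite-dimensional inductive limit $\overline{\bigcup_n \Kk_{A_{x_0}}(\Hh_{x_0}^{\otimes n}, \Hh_{x_0}^{\otimes (n+k)})}$ by genuine intertwiners requires averaging finite-level approximants over the isotropy group, which proceeds cleanly when $G_{x_0}^{x_0}$ is compact (or at least amenable), using the norm continuity of $g \mapsto \rho(g)\xi$ built into Definition \ref{gact}. After averaging, one must further check that the resulting intertwiner sections vary continuously over $X$, so that they assemble into bona fide elements of ${\mathcal O}_\rho$ as a $C_0(X)$-algebra; the upper semicontinuity of the bundles and the global continuity of $\rho$ are the essential inputs for this reassembly.
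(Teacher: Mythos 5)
Your proposal follows essentially the same route as the paper's proof: both rest on the Kajiwara--Pinzari--Watatani description of ${\mathcal O}_{\mathcal H}$ (for ${\mathcal H}$ full and finite projective with injective left action) as the $C^*$-algebra generated by $\bigcup_{m,n\ge 0}\Kk_A({\mathcal H}^{\otimes n},{\mathcal H}^{\otimes m})$ modulo the identification $T\mapsto T\otimes I$, together with the observation that the fixed points of the induced action $(g\cdot T)(\xi)=\rho^m(g)T(\rho^n(g^{-1})\xi)$ on each $\Kk_A({\mathcal H}^{\otimes n},{\mathcal H}^{\otimes m})$ are exactly the intertwiner spaces $(\rho^m,\rho^n)$. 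The paper stops there and concludes ${\mathcal O}_\rho\cong{\mathcal O}_{\mathcal H}^G$ in one line; you go further and attempt to justify both injectivity (via gauge-equivariance and the faithful conditional expectations onto the cores) and, more importantly, surjectivity (via Fourier decomposition along the gauge action and fiberwise averaging over the isotropy groups). That extra care is exactly where the content lies: the finite-level fixed-point computation only gives the inclusion ${\mathcal O}_\rho\subseteq{\mathcal O}_{\mathcal H}^G$, while the reverse inclusion --- that every $G$-fixed element of the completed algebra is a norm limit of genuine intertwiners --- requires the averaging step you single out, and as you observe this works cleanly only when the isotropy groups are compact (amenability alone does not produce norm-convergent averages onto fixed points). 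Since the theorem as stated imposes no compactness hypothesis on the isotropy, the ``main obstacle'' you flag is a genuine one, and it is present, unacknowledged, in the paper's own one-line conclusion; in the group-case antecedent \cite{D} the corresponding step is carried by compactness of the group, which is what your fiberwise reduction implicitly imports.
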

\begin{proof}
It is known that since $\Hh$ is finite projective, we have ${\mathcal L}_A({\mathcal H})\cong{\mathcal K}_A({\mathcal H})$. Moreover, the Cuntz-Pimsner algebra ${\mathcal O}_{\mathcal H}$ is isomorphic to the $C^*$-algebra generated by the span of 
$\ds \bigcup_{m,n\ge 0}{\mathcal K}_A({\mathcal H}^{\otimes n}, {\mathcal H}^{\otimes m})$
after we identify $T$ with $T\otimes I$, (see Proposition 2.5 in \cite{KPW}). This isomorphism preserves the $C_0(X)$-algebra structures.

The groupoid $G$ acts on ${\mathcal K}_A({\mathcal H}^{\otimes n}, {\mathcal H}^{\otimes m})$  by \[(g\cdot T)(\xi)=\rho^m(g)T(\rho^n(g^{-1})\xi)\] and the fixed point algebra is $(\rho^m, \rho^n)$. Indeed, for a fixed $x\in X$ and $g\in G_x^x$ we have $g\cdot T=T$ if and only if
$T\rho^n(g)=\rho^m(g)T$.

It follows that ${}^0{\mathcal O}_\rho\subseteq {\mathcal O}_{\mathcal H}$ and  that ${\mathcal O}_\rho$ is isomorphic to ${\mathcal O}_{\mathcal H}^G$.
\end{proof}

\begin{cor} Let $E$ be a topological graph such that ${\mathcal H}_E$ is full and finite projective  and the left multiplication of $C_0(E^0)$ is injective. If $G$ is a 
groupoid  acting on $E$ as in Definition \ref{action}
inducing a homomorphism  $\rho:G\to$ Iso$({\mathcal H}_E)$,
then ${\mathcal O}_\rho\cong C^*(E)^G$.
\end{cor}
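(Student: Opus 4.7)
The plan is to derive this corollary as a direct specialization of the preceding theorem to the graph $C^*$-correspondence $\mathcal{H}_E$. The three standing hypotheses (full, finite projective, injective left action) are precisely the hypotheses of that theorem, so the main task is to set up the notation and check that the groupoid action on $E$ produces genuine $C_0(G^0)$-algebra data to which the theorem applies.

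First, I would record the $C_0(G^0)$-algebra structure. The anchor map $p:E^0\to G^0$ from Definition \ref{action} is a continuous open surjection, so pullback by $p$ makes $A=C_0(E^0)$ a $C_0(G^0)$-algebra with fibers $A_u=C_0(p^{-1}(u))$. The graph correspondence $\mathcal{H}_E$ is a Hilbert $A$-module on which $A$ also acts on the left, and the fibers $(\mathcal{H}_E)_u$ coincide with the graph correspondences of the subgraphs $E_u$ obtained by restricting to $p^{-1}(u)$ (this is exactly the decomposition noted in the remark following Definition \ref{action}). Thus $\mathcal{H}_E$ is a $C^*$-correspondence over the $C_0(G^0)$-algebra $A$ in the sense required by the preceding theorem.

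Second, I would invoke the corollary preceding the Doplicher-Roberts section that already states: when $G$ acts on $E$, it acts on $\mathcal{H}_E$ and on $C^*(E)=\mathcal{O}_{\mathcal{H}_E}$. The action on $\mathcal{H}_E$ is precisely the homomorphism $\rho:G\to\mathrm{Iso}(\mathcal{H}_E)$ in the statement, so the compatibility conditions of Definition \ref{gact} are automatic. At this point all the hypotheses of the preceding theorem are in place: $\mathcal{H}_E$ is full and finite projective by assumption, the left multiplication by $C_0(E^0)$ is injective, and $G$ acts via $\rho$.

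Finally, I would apply the preceding theorem verbatim: it yields $\mathcal{O}_\rho\cong \mathcal{O}_{\mathcal{H}_E}^G$. Since $\mathcal{O}_{\mathcal{H}_E}=C^*(E)$ as $C_0(G^0)$-algebras with compatible $G$-actions, taking fixed points gives $\mathcal{O}_{\mathcal{H}_E}^G=C^*(E)^G$, whence $\mathcal{O}_\rho\cong C^*(E)^G$. There is no real obstacle; the only point that warrants a sentence of justification is the identification of fibers $(\mathcal{H}_E)_u$ with the correct graph correspondence over $A_u$ so that the fiberwise description $\mathcal{O}_{(\mathcal{H}_E)_u}=C^*(E_u)$ used in the theorem really matches the global identification $\mathcal{O}_{\mathcal{H}_E}=C^*(E)$.
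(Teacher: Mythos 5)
Your proposal is correct and follows exactly the route the paper intends: the corollary is a direct specialization of the preceding theorem to $\mathcal{H}=\mathcal{H}_E$ over $A=C_0(E^0)$, using the earlier corollary that the action on $E$ induces $\rho:G\to\mathrm{Iso}(\mathcal{H}_E)$ and the identification $C^*(E)=\mathcal{O}_{\mathcal{H}_E}$. The paper gives no separate argument beyond this, and your extra care about matching the fibers $(\mathcal{H}_E)_u$ with the graph correspondences of $E_u$ is a reasonable (if optional) elaboration.
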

\begin{example}
The Doplicher-Roberts algebra for the  groupoid action in Example \ref{33} is isomorphic with ${\mathcal O}_3^{S_3}\oplus {\mathcal O}_3^{S_3}$.
\end{example}

\bigskip

\section{Compact isotropy groupoid actions on graphs}

\bigskip

Suppose  the groupoid $G$ with  compact isotropy groups acts on a discrete locally finite graph $E$. It follows that $G^0$ is also discrete. It is known that in this case $C_0(E^0)\rtimes G$ is strongly Morita equivalent with a commutative $C^*$-algebra $C_0(X)$ with $X$ at most countable.

If we denote by $\{p_x\}_{x\in X}$ the minimal projections in $C_0(X)$, recall that
the isomorphism classes of separable nondegenerate $C^*$-correspondences ${\mathcal H}$ over $C_0(X)$ determine a discrete graph. More precisely, the  $*$-homomorphism $\phi: C_0(X)\to {\mathcal L}({\mathcal H})$ will define an incidence matrix $(a_{xy})_{x,y\in X}$ where $a_{xy}=\dim \phi(p_x){\mathcal H}p_y.$
(see Theorem 1.1 in \cite{KPQ}).

We recall now the following result from \cite{MPT}:

\begin{lem} Suppose $C$ and $D$ are strongly Morita equivalent $C^*$-algebras with $C$--$D$ imprimitivity bimodule ${\mathcal Z}$.

 If ${\mathcal H}$ is a $C^*$-correspondence over $C$, then ${\mathcal K}={\mathcal Z}^*\otimes_C{\mathcal H}\otimes _C{\mathcal Z}$ is a $C^*$-correspondence over $D$ such that ${\mathcal O}_{\mathcal H}$ and ${\mathcal O}_{\mathcal K}$ are strongly Morita equivalent.

\end{lem}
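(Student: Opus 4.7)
The plan is to first verify the bimodule structure of $\mathcal K$ and then to build an $\mathcal O_{\mathcal H}$--$\mathcal O_{\mathcal K}$ imprimitivity bimodule via a linking-algebra construction.

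First, I would check that $\mathcal K=\mathcal Z^*\otimes_C\mathcal H\otimes_C\mathcal Z$ inherits the structure of a $C^*$-correspondence over $D$. The right $D$-action comes from the rightmost factor $\mathcal Z$, the left $D$-action is inherited from the left $D$-action on $\mathcal Z^*$, and the $D$-valued inner product, on simple tensors, is determined by
\[\langle \zeta_1^*\otimes\xi_1\otimes z_1,\zeta_2^*\otimes\xi_2\otimes z_2\rangle_D=\langle z_1,\langle\xi_1,{}_C\langle\zeta_1,\zeta_2\rangle\xi_2\rangle_C\cdot z_2\rangle_D.\]
Since $\mathcal Z$ is a $C$--$D$ imprimitivity bimodule we have $\mathcal K_C(\mathcal Z^*)\cong D$ and $\mathcal K_D(\mathcal Z)\cong C$, which forces the left action of $D$ to be by adjointable operators on $\mathcal K$ and makes the bimodule and inner product compatibilities routine to verify.

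Next, I would form the linking algebra $L=\mathcal L(\mathcal Z)$, viewed as $2\times 2$ matrices with corners $C$ and $D$ and off-diagonals $\mathcal Z$, $\mathcal Z^*$. Let $p_C,p_D\in M(L)$ be the diagonal projections, so that $p_CLp_D\cong\mathcal Z$. The key construction is the $L$--$L$ correspondence
\[\mathcal M=\begin{pmatrix}\mathcal H & \mathcal H\otimes_C\mathcal Z\\ \mathcal Z^*\otimes_C\mathcal H & \mathcal K\end{pmatrix},\]
whose operations are defined block by block using the bimodule structures of $\mathcal Z$ and $\mathcal Z^*$. The universal property of the Cuntz--Pimsner algebra then gives a $C^*$-algebra $\mathcal O_{\mathcal M}$ together with distinguished corners $p_C\mathcal O_{\mathcal M}p_C$ and $p_D\mathcal O_{\mathcal M}p_D$. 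The strategy is to identify these corners with $\mathcal O_{\mathcal H}$ and $\mathcal O_{\mathcal K}$ respectively, and to conclude that the off-diagonal corner $p_C\mathcal O_{\mathcal M}p_D$ is the desired $\mathcal O_{\mathcal H}$--$\mathcal O_{\mathcal K}$ imprimitivity bimodule. Fullness of this bimodule propagates from the fullness of $\mathcal Z$ as a $C$--$D$ imprimitivity bimodule.

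The main obstacle is to verify that the corners $p_C\mathcal O_{\mathcal M}p_C$ and $p_D\mathcal O_{\mathcal M}p_D$ are genuinely $\mathcal O_{\mathcal H}$ and $\mathcal O_{\mathcal K}$ on the nose, rather than some weaker relative Cuntz--Pimsner algebras. This requires a careful analysis of how the Katsura ideal $J_{\mathcal M}$ decomposes under the block structure: one must show $p_CJ_{\mathcal M}p_C=J_{\mathcal H}$ and $p_DJ_{\mathcal M}p_D=J_{\mathcal K}$, so that the covariance condition for $\mathcal M$ restricts exactly to the covariance conditions on each corner. The natural tool is the gauge-invariant uniqueness theorem: $\mathcal M$ admits a gauge action compatible with those on $\mathcal O_{\mathcal H}$ and $\mathcal O_{\mathcal K}$, and injectivity of the canonical homomorphisms $\mathcal O_{\mathcal H}\to p_C\mathcal O_{\mathcal M}p_C$ and $\mathcal O_{\mathcal K}\to p_D\mathcal O_{\mathcal M}p_D$ follows from gauge invariance plus injectivity of the coefficient maps $C\to p_C\mathcal O_{\mathcal M}p_C$ and $D\to p_D\mathcal O_{\mathcal M}p_D$. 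Once this identification is achieved, strong Morita equivalence of $\mathcal O_{\mathcal H}$ and $\mathcal O_{\mathcal K}$ is immediate from the standard corner-Morita equivalence $pAp\sim_{\mathrm{M}} qAq$ implemented by $pAq$ whenever both corners are full.
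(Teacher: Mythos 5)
The paper itself offers no proof of this lemma: it is stated as a recollection from \cite{MPT}, so there is no in-text argument to compare against. Your outline is, in substance, the proof that appears in that reference (going back to Muhly--Solel): form the linking algebra $L$ of $\mathcal Z$, build the $2\times 2$ block correspondence $\mathcal M$ over $L$, and identify $\mathcal O_{\mathcal H}$ and $\mathcal O_{\mathcal K}$ with the two full corners $p_C\mathcal O_{\mathcal M}p_C$ and $p_D\mathcal O_{\mathcal M}p_D$. You have also correctly isolated where the real work lies, namely that the corners are the genuine Cuntz--Pimsner algebras and not relative ones; the decisive facts there are that $\ker\phi_{\mathcal M}\cap C=\ker\phi_{\mathcal H}$ and that $\phi_{\mathcal M}(c)$ is compact on the first row $\mathcal H\oplus(\mathcal H\otimes_C\mathcal Z)$ exactly when $\phi_{\mathcal H}(c)$ is compact, because tensoring on the right by the imprimitivity bimodule $\mathcal Z$ induces an isomorphism $\mathcal K_C(\mathcal H)\cong\mathcal K_D(\mathcal H\otimes_C\mathcal Z)$; together these give $J_{\mathcal M}\cap C=J_{\mathcal H}$ and $J_{\mathcal M}\cap D=J_{\mathcal K}$, after which covariance restricts corner by corner and the gauge-invariant uniqueness theorem gives injectivity as you say. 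Surjectivity onto the corners is the routine computation $t(\xi\otimes z)\,t(\eta\otimes z')^*=t(\xi)\pi({}_C\langle z,z'\rangle)t(\eta)^*$, and fullness of the corners follows from fullness of $\mathcal Z$ plus nondegeneracy of $\pi:L\to\mathcal O_{\mathcal M}$. A slightly more economical packaging of the same argument is to note that $\mathcal Z\otimes_D\mathcal K\cong\mathcal H\otimes_C\mathcal Z$, so that $\mathcal H$ and $\mathcal K$ are Morita equivalent as correspondences, and then quote the general theorem that Morita equivalent correspondences have Morita equivalent Cuntz--Pimsner algebras --- but the proof of that theorem is precisely your linking-algebra construction, so nothing is saved mathematically. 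In short: your proposal is correct in outline and coincides with the proof in the cited source rather than with anything written in this paper.
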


 \begin{thm}  Let $E$ be  a locally finite discrete  graph  with no sources and let $G$ be a  groupoid with compact isotropy groups acting on $E$. Then the crossed product $C^*(E)\rtimes G$ is  strongly Morita equivalent to a graph $C^*$-algebra, where the number of vertices is the cardinality of the spectrum of $C_0(E^0)\rtimes G$.
\end{thm}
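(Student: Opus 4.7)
The plan is to combine two tools developed earlier in the paper: the isomorphism identifying the crossed product Cuntz-Pimsner algebra with the Cuntz-Pimsner algebra of a crossed product correspondence, and the Morita invariance of the Cuntz-Pimsner functor (the lemma recalled from \cite{MPT}). Together with the classification of separable nondegenerate $C^*$-correspondences over a commutative coefficient algebra by graphs (Theorem 1.1 of \cite{KPQ}), these let one realize $C^*(E)\rtimes G$, up to strong Morita equivalence, as a graph algebra.

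First I would note that the $G$-action on $E$ yields an action on the graph correspondence $\mathcal{H}_E$ over $C_0(E^0)$, and that since $E$ has no sources and is locally finite, the Katsura ideal $J_{\mathcal{H}_E}$ equals $C_0(E^0)$; the ``no sources'' property is $G$-invariant, so the analogous statement holds for the crossed product correspondence and the identification $J_{\mathcal{H}_E\rtimes G}\cong J_{\mathcal{H}_E}\rtimes G$ is automatic. Invoking the crossed product theorem proved earlier (using amenability of the action, which is available because the isotropy groups are compact and the base $E^0$ is discrete), one gets
\[
C^*(E)\rtimes G \;\cong\; \mathcal{O}_{\mathcal{H}_E\rtimes G}.
\]
Next, compactness of the isotropy forces $C_0(E^0)\rtimes G$ to be strongly Morita equivalent to a commutative algebra $C_0(X)$, where $X$ is at most countable and identified with the spectrum of $C_0(E^0)\rtimes G$; denote the implementing imprimitivity bimodule by $\mathcal{Z}$. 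Applying the Morita lemma with $C=C_0(E^0)\rtimes G$, $D=C_0(X)$, and $\mathcal{H}=\mathcal{H}_E\rtimes G$, the transferred $C^*$-correspondence $\mathcal{K}=\mathcal{Z}^*\otimes_C(\mathcal{H}_E\rtimes G)\otimes_C\mathcal{Z}$ over $C_0(X)$ satisfies that $\mathcal{O}_{\mathcal{K}}$ is strongly Morita equivalent to $\mathcal{O}_{\mathcal{H}_E\rtimes G}\cong C^*(E)\rtimes G$. Finally, by Theorem 1.1 of \cite{KPQ}, the correspondence $\mathcal{K}$ is (up to isomorphism) the graph correspondence of a discrete graph $F$ with vertex set $X$ and adjacency $a_{xy}=\dim \phi(p_x)\mathcal{K}p_y$, where $\{p_x\}_{x\in X}$ are the minimal projections of $C_0(X)$; hence $\mathcal{O}_{\mathcal{K}}\cong C^*(F)$, giving the asserted strong Morita equivalence with the vertex count equal to $|X|$.

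The main obstacle is hypothesis-verification at the two junction points. First, one must confirm amenability carefully enough to apply the crossed product theorem: compact isotropy acting on the discrete unit space makes the action proper, from which amenability of the action groupoid should follow, but this requires a short argument. Second, and more subtly, the transported correspondence $\mathcal{K}$ over $C_0(X)$ must decompose diagonally over the minimal projections $\{p_x\}$ in the precise sense required by \cite{KPQ}, so that $F$ is genuinely a (possibly infinite) graph and $\mathcal{O}_{\mathcal{K}}$ is the corresponding graph algebra rather than a Toeplitz-like variant. This should be traceable back to the fact that the spectrum $X$ arises from $G$-orbits in $E^0$ together with characters of the isotropy groups, so that $C_0(X)$ inherits a natural diagonal decomposition from the Morita equivalence; pinning down this compatibility is where the bookkeeping lies.
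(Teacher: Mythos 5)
Your argument is correct, and it reaches the same conclusion by a route that is parallel to, but not identical with, the one actually written in the paper. You run the two recalled results as a pipeline: first $C^*(E)\rtimes G\cong\mathcal{O}_{\mathcal{H}_E\rtimes G}$ via the crossed-product theorem (with the checks on amenability and on $J_{\mathcal{H}_E\rtimes G}\cong J_{\mathcal{H}_E}\rtimes G$ that the paper leaves implicit), then transport $\mathcal{H}_E\rtimes G$ across the imprimitivity bimodule $\mathcal{Z}$ using the lemma from \cite{MPT} to get a correspondence $\mathcal{K}$ over $C_0(X)$, and finally recognize $\mathcal{K}$ as a graph correspondence via Theorem 1.1 of \cite{KPQ}. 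The paper instead never transports the correspondence to the commutative side: it decomposes $C_0(E^0)\rtimes G$ as $\bigoplus_i M_{n(i)}$, splits each corner $p_i(\mathcal{H}_E\rtimes G)p_j$ into minimal $M_{n(i)}$--$M_{n(j)}$ correspondences $M_{n(i),n(j)}$, and invokes the graphs-of-$C^*$-correspondences machinery of \cite{DKPS} to conclude that $\mathcal{O}_{\mathcal{H}_E\rtimes G}$ is the algebra of a graph of minimal correspondences, hence strongly Morita equivalent to a graph algebra with one vertex per matrix summand. The two approaches rest on the same structural facts (matrix-algebra decomposition of $C_0(E^0)\rtimes G$ from compact isotropy, and Morita invariance of the Cuntz--Pimsner construction), and each has a virtue: yours makes the target graph $F$ and its adjacency data $a_{xy}=\dim\phi(p_x)\mathcal{K}p_y$ completely explicit and delivers the vertex-count claim directly from $|X|$, while the paper's avoids choosing an imprimitivity bimodule and keeps the multiplicities $n(i)$ visible, which is convenient in the examples where one wants $C^*(E)\rtimes G\cong M_{n}(\mathcal{O}_k)$ on the nose. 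The residual bookkeeping you flag --- nondegeneracy and separability of $\mathcal{K}$ so that \cite{KPQ} applies, and that $\mathcal{O}_{\mathcal{K}}$ is the genuine graph algebra rather than a relative or Toeplitz variant --- does go through: nondegeneracy survives the Morita transfer, and with Katsura's conventions $\mathcal{O}_{\mathcal{H}_F}\cong C^*(F)$ for an arbitrary graph $F$, so no gap remains.
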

\begin{proof}
The idea is to decompose the  $C^*$-correspondence ${\mathcal H}_E\rtimes G$ over the  $C^*$-algebra $C_0(E^0)\rtimes G$, which is strongly Morita equivalent with a commutative $C^*$-algebra.

Let $\ds C_0(E^0)\rtimes G\cong \bigoplus_{i=1}^NM_{n(i)}$, where $N\in {\mathbb N}\cup\{\infty\}$ and $M_{n(i)}$ denotes the set of $n(i)\times n(i)$  matrix algebras. Consider now the  graph with  $N$ vertices   and at each vertex $v_i$ we assign the $C^*$-algebra $M_{n(i)}$. If $p_i$ is the unit in $M_{n(i)}$, whenever $p_i({\mathcal H}_E\rtimes G)p_j\neq 0$, we decompose this as a direct sum of minimal $M_{n(i)}$--$M_{n(j)}$ $C^*$-correspondences. Since $\CC$--$\CC$ $C^*$-correspondences are Hilbert spaces, a  minimal $M_{n(i)}$--$M_{n(j)}$ $C^*$-correspondence  is of the form $M_{n(i),n(j)}$, the set of rectangular matrices with $n(i)$ rows and $n(j)$ columns, with the obvious bimodule structure and inner product.
% Note that $M_{n,n}$ is the space of $n\times n$ matrices $M_n$ and $M_{n,1}\cong{\mathbb C}^n\cong M_{1,n}$. 
This decomposition determines  the number of edges between $v_j$ and $v_i$.

Since in this case $J_{\Hh_E}=C_0(E^0)$, we deduce that \[\Oo_{\Hh_E\rtimes G}\cong \Oo_{\Hh_E}\rtimes G\cong C^*(E)\rtimes G.\] It follows that $C^*(E)\rtimes G$ is isomorphic to the $C^*$-algebra of a graph of (minimal) $C^*$-correspondences  (see \cite{DKPS}), hence strongly Morita equivalent to a graph $C^*$-algebra. 

Note  that in  this case $C^*(E)$ is the $C^*$-algebra of a groupoid $\Gg_E$ and that $C^*(E)\rtimes G=C^*(\Gg_E\rtimes G)$

\end{proof}
\begin{cor}
If $G$ acts on $E$ as above, then $G$ acts on the AF-core $C^*(E)^\TT$ and $C^*(E)^\TT\rtimes G\cong (C^*(E)\rtimes G)^\TT$ is an AF-algebra.
\end{cor}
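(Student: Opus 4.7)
The first step is to produce the two actions and check they commute. The gauge action $\gamma:\TT\to\operatorname{Aut}(C^*(E))$ acts by $\gamma_t(s_e)=ts_e$ and $\gamma_t(p_v)=p_v$, while $G$ acts by permuting the generators in the fibers, $g\cdot s_e=s_{g\cdot e}$ and $g\cdot p_v=p_{g\cdot v}$. These actions manifestly commute on generators of each fiber $C^*(E)_{p(v)}=C^*(E_v)$, and therefore $\gamma_t$ descends to a $\TT$-action $\tilde\gamma$ on the crossed product by the pointwise formula $\tilde\gamma_t(\xi)(g)=\gamma_t(\xi(g))$ for $\xi\in\Gamma_c(G,r^*C^*(E))$. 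The same commutation shows that each $\alpha_g$ restricts to the fixed-point fiber $C^*(E)^\TT_{s(g)}\to C^*(E)^\TT_{r(g)}$, so $G$ genuinely acts on the AF-core $C^*(E)^\TT$.

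Next I would establish the identification $C^*(E)^\TT\rtimes G\cong(C^*(E)\rtimes G)^\TT$ by means of the canonical conditional expectation associated with the compact group $\TT$. Since $\TT$ is compact, averaging yields faithful conditional expectations $E_{\mathrm{g}}:C^*(E)\to C^*(E)^\TT$ and $\tilde E_{\mathrm{g}}:C^*(E)\rtimes G\to (C^*(E)\rtimes G)^{\TT}$. For $\xi\in\Gamma_c(G,r^*C^*(E))$ one has $\tilde E_{\mathrm{g}}(\xi)(g)=\int_\TT\gamma_t(\xi(g))\,dt=E_{\mathrm{g}}(\xi(g))$, so the image of $\tilde E_{\mathrm{g}}$ is the completion of $\Gamma_c(G,r^*(C^*(E)^\TT))$ in the crossed-product norm. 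The inclusion $C^*(E)^\TT\hookrightarrow C^*(E)$ is equivariant and hence induces a $*$-homomorphism $C^*(E)^\TT\rtimes G\to(C^*(E)\rtimes G)^{\TT}$ whose image is dense and which is injective by faithfulness of $\tilde E_{\mathrm{g}}$ together with the standard fact that the crossed product of an invariant $C^*$-subalgebra sits inside the crossed product of the ambient algebra. This gives the required isomorphism.

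For the AF conclusion, I would use the preceding theorem: under our hypotheses the Katsura ideal satisfies $J_{\Hh_E}=C_0(E^0)$, hence $J_{\Hh_E\rtimes G}\cong J_{\Hh_E}\rtimes G$, and therefore $C^*(E)\rtimes G\cong\Oo_{\Hh_E\rtimes G}$. Under this isomorphism the gauge action $\tilde\gamma$ corresponds to the Cuntz–Pimsner gauge action on $\Oo_{\Hh_E\rtimes G}$, so $(C^*(E)\rtimes G)^{\TT}$ is identified with the core $\Oo_{\Hh_E\rtimes G}^{\TT}=\overline{\bigcup_n\Kk_{A\rtimes G}((\Hh_E\rtimes G)^{\otimes n})}$. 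By the decomposition carried out in the preceding theorem, $\Hh_E\rtimes G$ is a graph of $C^*$-correspondences over the direct sum of matrix algebras $\bigoplus_i M_{n(i)}$; in particular each $\Kk_{A\rtimes G}((\Hh_E\rtimes G)^{\otimes n})$ is a direct sum of matrix algebras, and their increasing union is AF.

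The main obstacle I anticipate is the second paragraph, namely upgrading the obvious identification of fixed-point sections in $\Gamma_c$ to a genuine $C^*$-isomorphism at the level of the completions; one must verify that the natural map $C^*(E)^\TT\rtimes G\to(C^*(E)\rtimes G)^\TT$ is isometric, which relies on faithfulness of the conditional expectation and on the compatibility of the two crossed-product norms via $\tilde E_{\mathrm{g}}$. Everything else is either formal bookkeeping (paragraph one) or a direct application of results already established in the paper (paragraph three).
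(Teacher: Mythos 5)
The paper states this corollary without proof, and your argument supplies exactly the intended chain: the commuting actions (already recorded in the earlier corollary on graph correspondences) give the $G$-action on the core, the averaging/conditional-expectation argument identifies $(C^*(E)\rtimes G)^\TT$ with $C^*(E)^\TT\rtimes G$ (with the norm-compatibility issue you flag resolved by the amenability of $G$, which is implicitly in force since the theorem invokes $\Oo_{\Hh_E\rtimes G}\cong\Oo_{\Hh_E}\rtimes G$), and AF-ness follows from the graph-of-correspondences decomposition in the preceding theorem. The proof is correct; the only cosmetic slip is that the gauge-invariant core is the inductive limit of the subalgebras generated by $\pi(A\rtimes G)$ together with $t^{(k)}(\Kk_{A\rtimes G}((\Hh_E\rtimes G)^{\otimes k}))$ for $k\le n$, not literally $\overline{\bigcup_n\Kk_{A\rtimes G}((\Hh_E\rtimes G)^{\otimes n})}$, but each of these subalgebras is still a direct sum of matrix algebras here, so the conclusion stands.
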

\bigskip

\section{The $C^*$-correspondence of a groupoid representation}

\bigskip

Given a  group $G$ and a unitary representation $\rho:G\to U({\mathcal H})$ on a Hilbert space ${\mathcal H}$, in \cite{D1} we constructed a $C^*$-correspondence ${\mathcal E}(\rho)={\mathcal H}\otimes_{\mathbb C} C^*(G)$ over $C^*(G)$ and we studied the Cuntz-Pimsner algebra ${\mathcal O}_{{\mathcal E}(\rho)}$. 

%\begin{example} For $G$ a finite group and $\rho:G\to U(n)$ a unitary representation, the $C^*$-correspondence ${\mathcal E}(\rho)$ over $C^*(G)$ is isomorphic to ${\mathbb C}^n\otimes C^*(G)$ and ${\mathcal O}_{\mathcal E (\rho)}$ is isomorphic to ${\mathcal O}_n\rtimes G$.
%\end{example}

Now, let $G$ be a locally compact groupoid with a Haar system $\lambda$. Given a Hilbert bundle ${\mathcal H}$ over the unit space $G^0$ and a representation $\rho:G\to$Iso$({\mathcal H})$, denote by $\Gamma_c(G,r^*{\mathcal H})$ the space of continuous sections with compact support of the pull-back bundle $r^*{\mathcal H}$. We define the left and right multiplications of $C_c(G)$ on $\Gamma_c(G,r^*{\mathcal H})$ as in \cite{Re} by
\[(f\xi)(g)=\int_Gf(h)\rho(h)\xi(h^{-1}g)d\lambda^{r(g)}(h),\]
\[(\xi f)(g)=\int_G\xi(gh)f(h^{-1})d\lambda^{s(g)}(h),\]
and the inner product
\[\la\xi,\eta\ra(g)=\int_G\la\xi(h^{-1}),\eta(h^{-1}g)\ra_{s(h)}d\lambda^{r(g)}(h)\]
for $f\in C_c(G)$ and $\xi, \eta\in \Gamma_c(G,r^*{\mathcal H})$.
The completion ${\mathcal E}(\rho)$ of $\Gamma_c(G,r^*{\mathcal H})$ becomes a $C^*$-correspondence over $C^*(G)$, where the left action of $C_c(G)$ extends to a $*$-homomorphsim $C^*(G)\to {\mathcal L}({\mathcal E}(\rho))$.

Given representations $\rho_i:G\to$Iso$({\mathcal H}_i)$ for $i=1,2$, one may consider the tensor product representation $\rho_1\otimes\rho_2:G\to$Iso$({\mathcal H}_1\otimes{\mathcal H}_2)$ and the $C^*$-correspondence of $\rho_1\otimes\rho_2$ is the composition of the $C^*$-correspondences for $\rho_1,\rho_2$, see Theorem 3.4 in \cite{Re}.

We plan to study the Cuntz-Pimsner algebras $\Oo_{\Ee(\rho)}$ in future work.
\begin{rmk} In the context of self-similar actions of a groupoid $G$ on the path space of a finite graph $E$, there are other $C^*$-correspondences over $C^*(G)$ which were studied recently. Laca, Raeburn, Ramagge and Whittaker in \cite{LRRW} characterized KMS states on the Toeplitz algebra ${\mathcal T}_{\mathcal M}$ and on the Cuntz-Pimsner algebra ${\mathcal O}_{\mathcal M}$, where \[{\mathcal M}={\mathcal H}_E\otimes_{C(E^0)}C^*(G)\] becomes a Hilbert $C^*(G)$-module with usual inner product and right multiplication.
The left multiplication is determined by the unitary representation \[\rho:G\to {\mathcal L}({\mathcal M}),
\;\rho(g)(e\otimes f)=\begin{cases}(g\cdot e)\otimes \delta_{g\mid_e}f\;\text{if}\; s(g)=r(e)\\0\hspace{10mm}\text{otherwise,}\end{cases}\]
where $e\in E^1, \;f\in C^*(G)$ and $\delta_h\in C_c(G)$ denotes the point mass.
\end{rmk}

\bigskip


\begin{thebibliography}{0000}

\bigskip

\bibitem{AR} 
C. Anantharaman-Delaroche and J. Renault, {\em Amenable groupoids}, 
%With a foreword by Georges Skandalis and Appendix B by E. Germain,} Monographies de L'Enseignement Math\'ematique, 36, 
L'Enseignement Math\'ematique 36,
Gen\`eve, 2000.  

%\bibitem{A}  M.F. Atiyah, {\em K-Theory}, Benjamin, New York, 1967.

\bibitem{Bl} E. Blanchard, {\em D\' eformations de $C^*$-alg\` ebres de Hopf}, Bull. Soc. Math. France 124 (1996), 141--215.

\bibitem{Br} R. Brown, {\em Groupoids as coefficients}, Proc. London Math. Soc. (3) 25(1972) 413--426.

\bibitem{BM} A. Buss, R. Meyer, {\em Iterated crossed products for groupoid fibrations}, arXiv: 1604.02015v1.

%\bibitem{CE}  J. Cuntz, D.E. Evans, {\em Some remarks on the $C^*$-algebras associated with certain topological Markov chains},  Math. Scand. 48 (1981), no. 2, 235--240.
\bibitem{D} V. Deaconu, {\em Group actions on graphs and $C^*$-correspondences}, to appear in Houston J. Math.

\bibitem{D1} V. Deaconu, {\em Cuntz-Pimsner algebras of  group representations}, submitted.

\bibitem{DKPS}  V. Deaconu, A. Kumjian, D. Pask, A. Sims, {\em Graphs of $C^*$-correspondences and Fell bundles},  Indiana Univ. Math. J. 59 (2010), no. 5, 1687--1735.

  \bibitem{DKQ} V. Deaconu, A. Kumjian, J. Quigg, {\em Group actions on topological graphs}, Ergod. Th. Dyn. Sys. 32 (2012), no. 5, 1527--1566.
 \bibitem{DKR} V. Deaconu, A. Kumjian, B. Ramazan, {\em Fell bundles associated to groupoid morphisms},  Math. Scan. vol. 102 no.2 (2008), 305--319.
  
 % \bibitem{DPZ} S. Doplicher, C. Pinzari, R. Zuccante, {\em The $C^*$-algebra of a Hilbert bimodule}, Boll. Unione Mat. Ital. Sez. B Artic. Ric. Mat. (8) 1 (1998), no. 2, 263--281. 
  
 \bibitem{DR1}  S. Doplicher, J.E. Roberts, {\em Duals of compact Lie groups realized in the Cuntz algebras and their actions on C*-algebras}, J. of Funct. Anal.  74(1987) 96--120.
 
 \bibitem{DR2} S. Doplicher, J.E. Roberts, {\em A new duality theory for compact groups}, Invent. Math. 98 (1989) 157--218.
 
% \bibitem {EFTW}  M. Enomoto, M. Fujii, H. Takehana, Y. Watatani, {\em Automorphisms on Cuntz algebras II},  Math. Japon. 24 (1979/80), no. 4, 463--468.
 
% \bibitem{EP} R. Exel, E. Pardo, {\em Self-Similar graphs: a unified treatment of Katsura and Nekrashevych algebras}, arxiv preprint 1409.1107 v1.
 
 % \bibitem{ETW}  M. Enomoto,  H. Takehana, Y. Watatani, {\em Automorphisms on Cuntz algebras}, Math. Japon. 24 (1979/80), no. 2, 231--234.
  
 % \bibitem{FPS} C. Farthing, D. Pask, A. Sims, {\em Crossed products of $k$-graphs $C^*$-algebras by ${\mathbb Z}^l$}, Houston J. Math 35 ( 2009) no. 3, 903--933.
  
 % \bibitem{G} P. Green, {\em $C^*$-algebras of transformation groups with smooth orbit space}, Pacific J. Math 72 (1977),  no. 1, 71--97.
  
 % \bibitem{HR1} D. Handelman, W. Rossmann, {\em Product type actions of finite and compact groups}, Indiana Univ. Math. J. 33 (1984), no.4 , 480--509.
  
%\bibitem{HR2}  D. Handelman, W. Rossmann, {\em Actions of compact groups on AF C*-algebras}, Illinois J. Math. 29 (1985), no. 1, 51--95.

   \bibitem{HN}  G.~Hao, C.-K. Ng, \emph{Crossed products of C*-correspondences by
  amenable group actions}, J. Math. Anal. Appl. 345 (2008), no.~2,
  702--707.
    
  %  \bibitem{Iz}  M. Izumi, {\em  Finite group actions on $C^*$-algebras with the Rohlin property I}, Duke Math. J. 122 (2004), no. 2, 233--280.
    
    \bibitem{KPW} T. Kajiwara, C. Pinzari, Y. Watatani, {\em Ideal structure and simplicity of the $C^*$-algebras generated by Hilbert bimodules}, J. Funct. Anal. 159 (1998), no. 2, 295--322.
    
  \bibitem{KPQ} S. Kaliszewski, N. Patani, J.Quigg, {\em Characterizing graph $C^*$-correspondences}, Houston J. Math. 38 (2012), no. 3, 751--759.
    
 \bibitem{K} T. Katsura, {\em On $C^*$-algebras associated with $C^*$-correspondences},    J. Funct. Anal. 217(2004), 366--401.
    
  %  \bibitem{Ka04} T. Katsura, {\em A class of $C^*$-algebras generalizing both graph algebras and homeomorphism $C^*$-algebras. I. Fundamental results}, Trans. Amer. Math. Soc. 356(11) (2004), 4287--4322.
    
    %  \bibitem{Ka08} T. Katsura, {\em A construction of actions on Kirchberg algebras which induce given actions on their
%K-groups}, J. reine angew. Math., 617 (2008), 27--65.

    
  %  \bibitem{K1}  A.~Kishimoto, {\em Outer automorphisms and reduced crossed products of simple C*-algebras}, Comm. Math. Phys. 81 (1981), no. 3, 429--435.
    
  %  \bibitem{K2}  A. Kishimoto, {\em Actions of finite groups on certain inductive limit $C^*$-algebras}, Internat. J. Math. 1 (1990), no. 3, 267--292.
    
      
 %   \bibitem{KK} A.~Kishimoto, A.~Kumjian, \emph{Crossed products of Cuntz algebras by
  %quasi-free automorphisms}, Fields Inst. Comm., 13(1997), 173--192.    
  
  \bibitem{Ku} A. Kumjian, {\em On equivariant sheaf cohomology and elementary $C^*$-bundles}, J. Operator Theory 20(1988), 207--240.
  
  \bibitem{KMRW} A. Kumjian, P. Muhly, J. Renault, D. Williams, {\em The Brauer group of a locally compact groupoid}, American J. Math. 120(1998), 901--954.
  
  \bibitem{KP}  A. Kumjian, D. Pask, {\em $C^*$-algebras of directed graphs and group actions}, Ergod. Th. Dyn. Sys. 19(1999) 1503--1519.
  
 % \bibitem{KPRR} A. Kumjian, D. Pask, I. Raeburn, J. Renault, {\em Graphs, Groupoids and Cuntz-Krieger algebras}, J. of Funct. Anal. 144(1997) No. 2, 505--541.
  
  
       




\bibitem{LRRW} M. Laca, I. Raeburn, J. Ramagge, M.F. Whittaker, {\em Equilibrium states on operator algebras associated to self-similar actions of groupoids on graphs}, arXiv preprint 1610.00343v1.

\bibitem{LR} N.P. Landsman, B. Ramazan, {\em Quantization of  Poisson algebras associated to Lie algebroids}, Groupoids in
Analysis, Geometry, and Physics, Arlan Ramsay and Jean Renault
(editors), AMS Contemporary Mathematics 282 (2001),  159--192. 

\bibitem{L1} P.-Y. Le Gall, {\em Th\' eorie de Kasparov \' equivariante et groupo\" ides}, Th\' ese de Doctorat, Universit\' e Paris VII, 1994.

\bibitem{L2} P.-Y. Le Gall, {\em Th\' eorie de Kasparov \' equivariante et groupo\" ides I}, K-Theory, 16 (1999),
361--390.

 %\bibitem{MRS}  M.H.Mann, I. Raeburn, C.E. Sutherland, {\em Representations of finite groups and Cuntz-Krieger algebras}, Bull. Austral. Math. Soc. 46(1992), 225--243.
 
 \bibitem{M} P.D. Mitchener, {\em C*-categories, groupoid actions, equivariant KK-theory, and the Baum-Connes conjecture}, J. Funct. Anal. 214 (2004) 1--39.

\bibitem{MPT} P. Muhly, D. Pask,  M. Tomforde, {\em Strong shift equivalence of
$C^*$-correspondences},  Israel J. Math. 167 (2008), 315--346.

\bibitem{MRW1} P.S. Muhly, J. Renault and D. Williams, {\em Equivalence and isomorphism for groupoid C*-algebras}, J. Operator Theory 17(1987) 3--22.
  %\bibitem{MS}  P. Muhly, B.  Solel,  {\em On the Morita equivalence of tensor algebras} Proc. London Math. Soc. (3) 81 (2000), no. 1, 113--168.
  
  \bibitem{MW} P. Muhly, D. Williams, {\em Renault's equivalence theorem for groupoid crossed products}, NYJM Monographs vol. 3, 2008.

%\bibitem{N} V. Nekrashevych, {$C^*$-algebras and self-similar groups}, J. Reine Angew. Math. 630 (2009) 59--123.

%\bibitem{P1} M. V. Pimsner, Lectures on KK-theory(1989) (unpublished).
    \bibitem{P2} M. V. Pimsner, \emph{A class of $C^*$-algebras
    generalizing both Cuntz-Krieger algebras and crossed products by
    $\mathbb{Z}$}, Fields Inst. Comm. 12 (1997),
    189--212.
%\bibitem{RS} I. Raeburn, W. Szyma\' nski, {\em Cuntz-Krieger algebras of infinite graphs and matrices}, Trans. AMS 356(2004), 39--59.

\bibitem{Re}  J. Renault, {\em Groupoid cocycles and derivations},  Ann. Funct. Anal. 3 (2012), no. 2, 1--20. 
\bibitem{V} E. Vasselli, {\em The $C^*$-algebra of a vector bundle and fields of Cuntz algebras}, J. Funct. Anal. 222 (2005), no. 2, 491--502.


\bibitem{W} D. Williams, {\em Crossed-Products of $C^*$-algebras}, Mathematical Surveys and Monographs, vol. 134, AMS, Providence, RI, 2007.

% \bibitem{W} A. Wassermann, \em{Automorphic actions of compact groups on operator algebras}, Ph.D. thesis, 1981. 

\end{thebibliography}
\end{document}